\documentclass[11pt]{article}

\usepackage{verbatim}
\usepackage{amsfonts}
\usepackage{amsthm}
\usepackage{amssymb}
\usepackage{amsmath}
\usepackage[latin1]{inputenc}
\usepackage{graphicx}
\usepackage{epstopdf}
\DeclareGraphicsRule{.tif}{png}{.png}{`convert #1 `dirname #1`/`basename #1 .tif`.png}

\addtolength{\textwidth}{2,5cm} \addtolength{\oddsidemargin}{-1cm}
\setlength{\topmargin}{-0,5cm} \addtolength{\textheight}{2cm}

\theoremstyle{plain}

\newtheorem{thm}{Theorem}[section]
\newtheorem{prop}[thm]{Proposition}
\newtheorem{lem}[thm]{Lemma}

\newtheorem{cor}[thm]{Corollary}
\theoremstyle{definition}

\newtheorem{rem}[thm]{Remark}
\newtheorem{example}[thm]{Example}
\newtheorem{problem}[thm]{Problem}

\newcommand{\g}{{\gamma}}

\newcommand{\be}{{\beta}}

\newcommand{\s}{{\sigma}}

\newcommand{\NDes}{\operatorname{NDes}}

\newcommand{\maj}{\operatorname{maj}}
\newcommand{\fmaj}{\operatorname{fmaj}}

\newcommand{\inv}{\operatorname{inv}}
\newcommand{\me}{\operatorname{N}}
\newcommand{\DDes}{\operatorname{DDes}}
\newcommand{\nmaj}{\operatorname{nmaj}}
\newcommand{\dmaj}{\operatorname{dmaj}}
\newcommand{\des}{\operatorname{des}}
\newcommand{\ndes}{\operatorname{ndes}}
\newcommand{\ddes}{\operatorname{ddes}}
\newcommand{\Des}{\operatorname{Des}}

\newcommand{\NN}{\mathbb{N}}
\newcommand{\ZZ}{\mathbb{Z}}
\newcommand{\PP}{\mathbb{P}}

\begin{document}

\title{Equidistribution of negative statistics and quotients of Coxeter groups of type $B$ and $D$}
\author{Riccardo Biagioli \footnote{Dedicated to the memory of my friend and colleague Giulio Minervini.}\\
Universit\'e de Lyon, Universit\'e Lyon 1\\
Institut Camille Jordan - UMR 5208 du CNRS\\
43, boulevard du 11 novembre 1918\\
F - 69622 Villeurbanne Cedex
}

\date{}
\maketitle

\begin{abstract}
We generalize some identities and $q$-identities previously known for the symmetric group to Coxeter groups of type $B$ and $D$. The extended results include theorems of Foata and Sch\"utzenberger, Gessel, and Roselle on various distributions of inversion number, major index, and descent number. In order to show our results we provide caracterizations of the systems of minimal coset representatives of Coxeter groups of type $B$ and $D$.
\end{abstract}

\section{Introduction}

A well known theorem of MacMahon \cite{MM} shows that the {\em length function} and the {\em major index} are equidistributed over the symmetric group $S_n$. 
We recall that the length of a permutation $\sigma \in S_n$ is given by the {\em number of inversions}, denoted $\inv(\s):=|\{(i,j) \mid i<j  , \; \s(i) > \s(j) \}|$, and the major index of $\s$  is the sum of all its {\em descents}. More precisely,
$$ \maj(\sigma):=\sum_{i \in \Des(\sigma)} i, $$ 
where $\Des(\s):=\{i \in [n-1] \mid  \s(i)>\s(i+1)\}$. Foata gave a bijective proof of this equidistribution theorem in \cite{F}.  He studied further his bijection and together with 
Sch\"utzenberger derived the two following results \cite{FS}. The first one is a refinement of MacMahon's theorem, asserting the equidistribution of major index and number of inversions over descent classes. 

\begin{thm}[Foata-Sch\"utzenberger]\label{FS1}
Let $M=\{m_1,\ldots, m_t\}_< \subseteq \{1,\ldots, n-1\}$. Then
\[ \sum_{\{\s \in S_n \mid \Des(\s^{-1})=M\}} q^{\maj(\s)}= \sum_{\{\s \in S_n \mid \Des(\s^{-1})=M\}} q^{\inv(\s)}\]
\end{thm}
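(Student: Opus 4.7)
The plan is to produce a single bijection $\phi \colon S_n \to S_n$ that simultaneously satisfies
\[ \inv(\phi(\s)) = \maj(\s) \quad \text{and} \quad \Des(\phi(\s)^{-1}) = \Des(\s^{-1}) \]
for every $\s \in S_n$. Such a $\phi$ restricts to a bijection between the two sets indexed by $\Des(\s^{-1}) = M$ on each side of the identity, and the claimed equality of generating functions then follows at once by summing $q^{\inv} = q^{\maj \circ \phi^{-1}}$ over the common index set.

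The natural candidate is Foata's second fundamental transformation, defined inductively on the length of the one-line notation. Starting from $\phi(\s_1) := \s_1$, and assuming $\phi(\s_1 \cdots \s_{k-1})$ has already been built with last letter $a$, one forms $\phi(\s_1 \cdots \s_k)$ by splitting the current word into maximal blocks according to how the letters compare with the new letter $\s_k$, cyclically shifting each block, and then appending $\s_k$. The precise cutting rule (one case when $a \le \s_k$, the dual case when $a > \s_k$) is calibrated so that the inversions created by appending $\s_k$ after the cyclic shifts equal the change in major index from $\s_1 \cdots \s_{k-1}$ to $\s_1 \cdots \s_k$. A direct induction on $k$ then yields the first identity $\inv(\phi(\s)) = \maj(\s)$.

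The substantive point, and the step I expect to be the main obstacle, is the preservation of the inverse descent set. I would reformulate it combinatorially via the classical observation that $i \in \Des(\s^{-1})$ iff $i{+}1$ appears strictly to the left of $i$ in the one-line notation of $\s$; the task thus reduces to showing that Foata's transformation preserves, for every pair of consecutive values $\{i, i+1\}$, their left-to-right order. This is again an induction on $k$: within each block produced by the cutting step, all non-terminal letters lie on one side of the threshold $\s_k$, so a cyclic rotation inside a block cannot swap any pair $\{i, i+1\}$ both of whose elements sit on that side. The pairs that potentially could be affected are those straddling $\s_k$, together with pairs involving $\s_k$ itself once it is appended, and these are handled by comparing the two branches of the cutting rule. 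Carrying out this case analysis cleanly is the delicate part of the proof.

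Combining the two preservation statements gives the required bijection $\phi$ between $\{\s \in S_n : \Des(\s^{-1}) = M\}$ equipped with $\maj$ on the one side and $\inv$ on the other, establishing Theorem~\ref{FS1}.
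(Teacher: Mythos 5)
The paper does not prove Theorem~\ref{FS1} at all: it is quoted as a known result of Foata and Sch\"utzenberger, with a citation, and nothing later in the paper depends on any particular proof of it. So there is no internal argument to compare against; what you have written is essentially a reconstruction of the original Foata--Sch\"utzenberger proof, and your overall strategy (exhibit one bijection $\phi$ with $\inv\circ\phi=\maj$ and $\Des(\phi(\s)^{-1})=\Des(\s^{-1})$, then restrict to the exact descent class $M$) is the right one and does prove the stated identity.

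The one place where your write-up stops short is the step you yourself flag as delicate, and it is worth noting that it closes with a single observation rather than a long case analysis. After reducing to the statement that the transformation preserves the left-to-right order of every pair of consecutive values $\{i,i+1\}$, consider one inductive step: the word $\phi(\s_1\cdots\s_{k-1})$ is cut into blocks relative to the threshold $x:=\s_k$, each block consisting of a terminal letter on one side of $x$ and non-terminal letters all on the other side, and each block is cyclically shifted. The only pairs whose relative order can change are (terminal, non-terminal) pairs inside a single block, i.e.\ pairs with one member $\le x$ and the other $>x$. For such a pair to be of the form $\{i,i+1\}$ one would need $i\le x< i+1$, forcing $i=x$; but $x=\s_k$ has not yet been inserted into the word, so no letter of the word equals $x$, and the case where $i+1$ is the terminal letter gives $i>x\ge i+1$, which is absurd. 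Hence no consecutive pair already present is disturbed, and the appended letter $x$ sits at the right end of both $\s_1\cdots\s_k$ and $\phi(\s_1\cdots\s_k)$, so the pairs $\{x-1,x\}$ and $\{x,x+1\}$ agree as well. With that observation supplied, your induction goes through and the proof is complete.
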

The second one concerns the symmetry of the distribution of the major index and the inversion number over the symmetric group.
\begin{thm}[Foata-Sch\"utzenberger] \label{FS2} The pairs of statistics $(\maj,\inv)$ and $(\inv,\maj)$ have the same distribution on $S_n$, namely
\[S_n(t,q):=\sum_{\s \in S_n} t^{\maj(\s)}q^{\inv(\s)}=\sum_{\s \in S_n} t^{\inv(\s)}q^{\maj(\s)}.\]
\end{thm}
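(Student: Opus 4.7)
The plan is to deduce Theorem \ref{FS2} directly from Theorem \ref{FS1} together with the elementary identity $\inv(\s) = \inv(\s^{-1})$, without constructing any new bijection from scratch.

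First, I would reinterpret Theorem \ref{FS1} as the existence of a bijection. Equidistribution of $\maj$ and $\inv$ on each fiber $\{\s \in S_n : \Des(\s^{-1}) = M\}$ yields, for every $M \subseteq [n-1]$, a bijection $F_M$ of this fiber satisfying $\inv(F_M(\s)) = \maj(\s)$. Assembling these gives a bijection $F : S_n \to S_n$ with
$$ \inv(F(\s)) = \maj(\s) \qquad \text{and} \qquad \Des(F(\s)^{-1}) = \Des(\s^{-1}). $$
Because $\maj(\tau) = \sum_{i \in \Des(\tau)} i$ depends only on the descent set, the second property promotes to the stronger equality $\maj(F(\s)^{-1}) = \maj(\s^{-1})$.

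Next, I would run the following short chain of substitutions. Replacing $\s$ by $\s^{-1}$ in $S_n(t,q)$ and using $\inv(\s) = \inv(\s^{-1})$ yields
$$ S_n(t,q) = \sum_{\s \in S_n} t^{\maj(\s^{-1})} q^{\inv(\s)}. $$
Substituting $\s = F(\tau)$ and applying the two identities for $F$ above,
$$ S_n(t,q) = \sum_{\tau \in S_n} t^{\maj(\tau^{-1})} q^{\maj(\tau)}. $$
This last expression is manifestly symmetric in $t$ and $q$: the involution $\tau \mapsto \tau^{-1}$ swaps the two exponents. Running the same derivation with the roles of $t$ and $q$ exchanged shows that this symmetric quantity also equals $S_n(q,t)$, completing the proof.

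The main obstacle is conceptual rather than technical: one must notice that Theorem \ref{FS1} actually delivers more than equidistribution of two one-variable statistics. Because it controls $\Des(\s^{-1})$ — and therefore $\maj(\s^{-1})$ — along the bijection, it can be coupled with the $\s \leftrightarrow \s^{-1}$ symmetry of $\inv$ to produce the desired joint symmetry of $(\maj,\inv)$. Once this observation is in place, the rest of the argument is a short chain of substitutions requiring no further combinatorial input; this is the reason the result is \emph{derived} from, rather than proved in parallel with, the descent-class refinement.
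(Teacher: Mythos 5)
Your derivation is correct. The paper itself offers no proof of Theorem \ref{FS2} --- it is quoted as a classical result from Foata--Sch\"utzenberger \cite{FS} --- so there is no in-paper argument to compare against; but the route you take (assemble the fiberwise bijections from Theorem \ref{FS1} into a global bijection $F$ with $\inv(F(\s))=\maj(\s)$ and $\Des(F(\s)^{-1})=\Des(\s^{-1})$, upgrade the latter to $\maj(F(\s)^{-1})=\maj(\s^{-1})$, and couple with $\inv(\s)=\inv(\s^{-1})$ to land on the manifestly symmetric series $\sum_{\tau} t^{\maj(\tau^{-1})}q^{\maj(\tau)}$) is exactly the classical deduction of the joint symmetry from the descent-class refinement, and every step checks out. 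One tiny simplification: once you know $S_n(t,q)=\sum_{\tau} t^{\maj(\tau^{-1})}q^{\maj(\tau)}$ and that this right-hand side is invariant under swapping $t$ and $q$, the conclusion $S_n(t,q)=S_n(q,t)$ follows by merely renaming variables in the identity already proved, so the final ``rerun with $t$ and $q$ exchanged'' is not really needed.
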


Theorem \ref{FS1} has been extensively studied and generalized in many ways in the last three decades. Nevertheless, it still receives a lot of attention as shown by two recent papers of Hivert, Novelli, and Thibon \cite{HNT}, and of Adin, Brenti, and Roichman \cite{ABR2}, where a multivariate generalization and an extension  to the hyperoctahedral group of it are provided. In the latter paper, the problem of finding an analogue of this Foata-Sch\"utzenberger theorem for the Coxeter groups of type $D$ is proposed \cite[Problem 5.6]{ABR2}.

In this paper we answer this question. Actually, we show that the {\em negative major indices} ``$\nmaj$", introduced in \cite{ABR1} on Coxeter groups of type $B$, and ``$\dmaj$", defined in \cite{B} on Coxeter groups of type $D$, give generalizations of the first and second Foata-Sch\"utzenberger identities to $B_n$ and $D_n$.  In our analysis we derive nice relations among {\em quotients}, or sets of minimal coset representatives, of $B_n$ and $D_n$ that are interesting in their own. Explicit maps between these quotients are shown, and used to compute some generating functions. 

Finally, we use our results, and the {\em negative descent numbers}, to give generalizations to $B_n$ and $D_n$ of two classical $q$-identities. The first one, due to Roselle \cite{Ros} (see also Rawlings \cite[(2.4)]{Raw}), is the generating function of the inversion number and major index over the symmetric group: for undefined notation see next section.
\begin{thm}[Roselle]\label{Roselle}
$$ \sum_{n\geq 0} S_n(t,q) \frac{u^n}{(t;t)_n (q;q)_n}=\frac{1}{(u;t,q)_{\infty,\infty}},$$
\end{thm}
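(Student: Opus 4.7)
The plan is to prove Roselle's theorem by interpreting both sides as generating functions for biwords (double sequences) and constructing a weight-preserving bijection. For the right-hand side, expanding
\begin{equation*}
\prod_{i,j\ge 0}\frac{1}{1-ut^iq^j} \;=\; \sum_{n\ge 0}u^n \!\!\sum_{\substack{(m_{ij})_{i,j\ge 0}\\ \sum m_{ij}=n}} t^{\sum i\, m_{ij}}\, q^{\sum j\, m_{ij}},
\end{equation*}
shows that the coefficient of $u^n$ enumerates multisets of $n$ pairs in $\NN\times\NN$, equivalently biwords $\binom{\mathbf{a}}{\mathbf{b}}$ of length $n$ with columns in lexicographic order, weighted by $t^{|\mathbf{a}|}q^{|\mathbf{b}|}$.

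For the left-hand side, using $1/(t;t)_n=\sum_{\mathbf{a}\in\NN^n\text{ weakly increasing}}t^{|\mathbf{a}|}$ and its analogue in $q$, the coefficient of $u^n$ becomes
\begin{equation*}
\frac{S_n(t,q)}{(t;t)_n(q;q)_n} \;=\; \sum_{\sigma\in S_n}\,\sum_{\substack{\mathbf{a},\mathbf{b}\in\NN^n\\\text{weakly increasing}}} t^{\maj\sigma+|\mathbf{a}|}\,q^{\inv\sigma+|\mathbf{b}|}.
\end{equation*}
I would then seek a bijection between such triples $(\sigma,\mathbf{a},\mathbf{b})$ and lex-sorted biwords preserving the joint $(t,q)$-weight.

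The bijection would proceed in two stages. First, absorb $\maj\sigma$ into $\mathbf{a}$ via Stanley's $P$-partition bijection on the chain poset, producing an unrestricted $\mathbf{a}'\in\NN^n$ of total weight $\maj\sigma+|\mathbf{a}|$ whose pattern of strict ascents encodes the descents of $\sigma^{-1}$; symmetrically, after applying Foata's second fundamental transformation $\phi$ to $\sigma$ (for which $\inv\circ\phi=\maj$), carry out the analogous $P$-partition step on $(\phi(\sigma),\mathbf{b})$ to obtain $\mathbf{b}'\in\NN^n$ of total weight $\inv\sigma+|\mathbf{b}|$. Sorting the columns of $\binom{\mathbf{a}'}{\mathbf{b}'}$ lexicographically yields the desired biword. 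The main obstacle is invertibility: one must check that $\sigma$ can be recovered uniquely from the sorted biword, which rests on a delicate compatibility between the descent structures used by the two $P$-partition stages, mediated by $\phi$. Once this is verified, summing over $n$ matches both sides with $\prod_{i,j\ge 0}(1-ut^iq^j)^{-1}$, completing the proof.
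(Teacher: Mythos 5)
First, note that the paper itself gives no proof of this theorem: it is quoted as a classical result of Roselle (see also Rawlings), so there is no internal argument to compare against and your proposal must stand on its own. Its bookkeeping is correct: the coefficient of $u^n$ in $1/(u;t,q)_{\infty,\infty}$ does enumerate multisets of $n$ pairs in $\NN\times\NN$ by $t^{|\mathbf{a}|}q^{|\mathbf{b}|}$, and $1/(t;t)_n$, $1/(q;q)_n$ correctly absorb two weakly increasing sequences, so the identity reduces to a bijection between triples $(\s,\mathbf{a},\mathbf{b})$ and lex-sorted biwords. This is the standard route (Garsia--Gessel, Rawlings).

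The genuine gap is exactly the step you flag as ``the main obstacle,'' and it is not a routine verification: as described, the construction fails. You encode $(\s,\mathbf{a})$ and $(\phi(\s),\mathbf{b})$ by two \emph{independent} $P$-partition steps and then discard the common indexing by sorting the columns. Already for $n=2$ (where $\phi$ is the identity) the two rows are encoded identically relative to $\s$, so every column of $\bigl(\begin{smallmatrix}\mathbf{a}'\\ \mathbf{b}'\end{smallmatrix}\bigr)$ pairs the $i$-th largest entry of $\mathbf{a}'$ with the $i$-th largest of $\mathbf{b}'$: the triples $(12,(0,1),(0,1))$ and $(21,(0,0),(0,0))$, both of weight $tq$, collide onto the multiset $\{(0,0),(1,1)\}$, while $\{(0,1),(1,0)\}$ is never reached. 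The point is that in the correct bijection the two encodings are intertwined -- one sequence is read along positions and the other along values of $\s$, so that the \emph{pairing} of the two rows carries the permutation -- and this is not achieved by two independent encodings followed by a sort. The standard repair, which is also the shortest complete proof, is to first prove bijectively the version with $\maj(\s^{-1})$ in place of $\inv(\s)$ (there the compatibility between the block structure of the top row and the descents read off the bottom row is automatic), obtaining $\sum_{\s}t^{\maj(\s)}q^{\maj(\s^{-1})}$ as the numerator, and then to invoke Theorem \ref{FS1} on each inverse descent class, together with $\inv(\s)=\inv(\s^{-1})$, to convert this polynomial into $S_n(t,q)$. Your appeal to Foata's $\phi$ (which preserves $\IDes$) is morally this same substitution; performing it at the level of generating functions over inverse descent classes is sound, whereas pushing it inside the bijection is precisely where your unverified ``delicate compatibility'' lives. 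As written, the proof is incomplete.
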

\noindent where $S_0(t,q)=1$.  The second one is the trivariate distribution of inversion number, major index, and {\em number of descents},  due to Gessel \cite[Theorem 8.4]{Ges}, (see also \cite{GG}).
\begin{thm}[Gessel]\label{Gessel}
$$\sum_{n\geq 0} \frac{u^n}{[n]_q !} \frac{\sum_{\s \in S_n} t^{\maj(\s)} q^{\inv(\s)} p^{\des(\s)}}{(t;q)_{n+1}}=\sum_{k \geq 0} p^{k} e[u]_q e[tu]_q\cdots e[t^k u]_q.$$
\end{thm}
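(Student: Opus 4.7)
The plan is to establish Gessel's identity via Stanley's $P$-partition theory, interpreting both sides as generating functions for a common set of pairs $(\s, f)$ with $\s \in S_n$ and $f$ an integer-valued sequence compatible with $\s$.

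First, I would expand the right-hand side using $e[z]_q = \sum_{n \geq 0} z^n/[n]_q!$ together with the $q$-multinomial theorem: for fixed $n, k$, the coefficient of $u^n/[n]_q!$ in $\prod_{i=0}^{k} e[t^i u]_q$ equals $\sum \binom{n}{n_0, \ldots, n_k}_q \prod_i t^{i n_i}$, the sum being over weak compositions of $n$ into $k+1$ parts. MacMahon's classical identity $\binom{n}{n_0, \ldots, n_k}_q = \sum_\s q^{\inv \s}$, where the sum ranges over permutations whose descent positions lie inside the prescribed block boundaries, then rewrites the coefficient as a sum over pairs $(\s, w)$ with $w \in \{0, 1, \ldots, k\}^n$ weakly increasing in the order determined by $\s$ and strictly so at the descents of $\s$, weighted by $q^{\inv \s}\, t^{|w|}$.

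Next, I would interchange the order of summation, fixing a pair $(\s, w)$ with $w \in \NN^n$ satisfying the monotonicity above and summing $p^k$ over all $k \geq \max_i w_i$. This produces a factor $p^{\max w}/(1-p)$. For fixed $\s$, the remaining inner sum $\sum_w t^{|w|} p^{\max w}$ is a classical $P$-partition sum; Stanley's fundamental lemma evaluates it to $t^{\maj \s}/(t;q)_{n+1}$, while the geometric series in $p$ telescopes to yield the factor $p^{\des \s}$. Summing over $\s \in S_n$ and over $n \geq 0$ recovers the left-hand side.

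The main obstacle is aligning the three statistics on a single permutation. The $\inv$-weight is born naturally on the right via the $q$-multinomial, whereas the $\maj$- and $\des$-weights emerge from the $P$-partition step applied to the inverse of $\s$; reconciling these requires either a careful choice of the bijection's direction (indexing the word $w$ through $\s^{-1}$ rather than $\s$) or an application of Foata's fundamental transformation to transfer between $(\maj,\cdot)$-tracked and $(\inv,\cdot)$-tracked forms of the identity. Once this bookkeeping is sorted out, the equidistribution encoded in Foata's map is exactly what makes the two sides coincide, and the interchange-of-summation argument above closes the proof.
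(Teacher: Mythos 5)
You should first note that the paper does not prove this theorem at all: it is quoted from Gessel's thesis (see also Garsia--Gessel), so your argument can only be measured against the classical derivation. Your overall architecture is exactly that derivation and is sound in outline: extract the coefficient of $u^n/[n]_q!$ in $\prod_{i=0}^{k}e[t^iu]_q$ as a sum of $q$-multinomial coefficients weighted by $\prod_i t^{in_i}$, convert each $q$-multinomial into a sum of $q^{\inv(\s)}$ over a descent class (this is Theorem \ref{stanley}), reorganize into pairs $(\s,w)$ with $w$ a $\s$-compatible word in $\{0,\ldots,k\}^n$, and interchange the sums over $k$ and $w$.

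The genuine gap is in your evaluation of the inner sum. The quantity $\frac{1}{1-p}\sum_{w}t^{|w|}p^{\max w}$, taken over $\s$-compatible words $w\in\NN^n$, involves only the variables $p$ and $t$; it cannot produce the denominator $(t;q)_{n+1}=(1-t)(1-tq)\cdots(1-tq^{n})$, which contains $q$. Carrying the computation out (shift the word by the number of descents weakly to its right to remove the strictness conditions, then sum the resulting Gaussian binomial coefficients in $t$ against powers of $p$) yields $t^{\maj(\s)}p^{\des(\s)}\big/\prod_{j=0}^{n}(1-pt^{j})$, i.e.\ the denominator is $(p;t)_{n+1}$, not $(t;q)_{n+1}$. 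This is a defect of the target as much as of your write-up: comparing constant terms in $u$ on the two sides of the displayed identity gives $1/(1-t)$ on the left and $\sum_{k}p^{k}=1/(1-p)$ on the right, so the statement as printed cannot be literally correct, and the classical form of Gessel's theorem indeed carries $(p;t)_{n+1}$ in the denominator. Your method, executed carefully, proves that corrected statement; as written, the claim that ``Stanley's fundamental lemma evaluates it to $t^{\maj(\s)}/(t;q)_{n+1}$'' is false. A smaller but real point: the $\s$ versus $\s^{-1}$ bookkeeping is resolved simply by the substitution $\s\mapsto\s^{-1}$, which preserves $\inv$ and exchanges $(\maj(\s^{-1}),\des(\s^{-1}))$ with $(\maj(\s),\des(\s))$; invoking Foata's fundamental transformation here, your proposed alternative, would not work, since that map does not preserve the joint distribution of $\inv$ with $\maj$ and $\des$ on the same permutation. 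Only the first of your two proposed fixes is viable.
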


\section{Preliminaries and notation}

In this section we give some definitions, notation and results that
will be used in the rest of this work. For $n \in \NN$ we let $[n]:= \{ 1,2, \ldots , n \} $ (where $[0]:= \emptyset $). Given $n, m \in \ZZ, \; n \leq m$, we let $[n,m]:=\{n,n+1, \ldots, m \}.$ We let $\PP:=\{1,2,3,\ldots\}.$
The cardinality of a set $A$ will be denoted by $|A|$ and we let ${[n] \choose 2}:=\{S \subseteq [n] \mid |S|=2 \}.$ Given a set $A$, we denote $A_<:=\{a_1,a_2,\ldots\}$ where $a_1<a_2<\ldots$.

For our study we need notation for $q$-analogs of the factorial, binomial coefficient, and multinomial coefficient. These are defined by the following expressions
$$[n]_q:=1+q+q^2+\ldots + q^{n-1}; \;\;\;\;\; [n]_q!:=[n]_q[n-1]_q\cdots[2]_q[1]_q;$$
$$\begin{bmatrix} n \\ m \end{bmatrix}_q:=\frac{[n]_q!}{[m]_q![n-m]_q!}; \;\;\;\;\;\;
\begin{bmatrix} n \\ m_1, \; m_2,  \ldots, m_t \end{bmatrix}_q:=\frac{[n]_q!}{[m_1]_q![m_2]_q! \cdots [m_t]_q!}.$$
As usual we let
\begin{eqnarray*}
(a;q)_0&:=&1 \\
(a;q)_n&:=&(1-a)(1-aq)\cdots (1-aq^{n-1})\\
(a;q)_\infty&:=&\prod_{n\geq 1}(1-aq^{n-1}).
\end{eqnarray*}
Moreover, for $r, s \in \NN$ we let
$$(a;t,q)_{r,s}:=\left\{\begin{array}{ll} \;\; 1 & {\rm  if} \ r \ {\rm or} \ s \ {\rm are \ zero}\\
						{\displaystyle\prod_{1\leq i\leq r}\prod_{1\leq j\leq s} (1-at^{i-1}q^{j-1})} & {\rm if} \ r,s \geq 1\end{array}\right.,$$
and
$$(a;t,q)_{\infty,\infty}:=\prod_{i\geq 1}\prod_{j\geq 1}(1-at^{i-1}q^{j-1}).$$
Finally, 
$$e[u]_q:=\sum_{n\geq 0}\frac{u^n}{[n]_q !},$$
is the $q$-analogue of the exponential function. The following $q$-binomial theorem is well known (see e.g. \cite{An})
\begin{thm}\label{qbinomial}
$$(-xq;q)_n=\sum_{m=0}^n \begin{bmatrix} n \\ m \end{bmatrix}_q q^{{m+1 \choose 2}}x^m.$$
\end{thm}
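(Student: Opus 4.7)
The plan is to prove the identity by induction on $n$, using the $q$-Pascal recurrence for the Gaussian binomial coefficients, namely
\[
\begin{bmatrix} n+1 \\ m \end{bmatrix}_q = \begin{bmatrix} n \\ m \end{bmatrix}_q + q^{n+1-m}\begin{bmatrix} n \\ m-1 \end{bmatrix}_q,
\]
which itself follows directly from the definitions of $[n]_q!$ and the Gaussian binomial. This makes induction the most natural approach because $(-xq;q)_n$ factors as $(-xq;q)_{n-1}\cdot(1+xq^n)$, a recurrence in the same spirit.

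The base case $n=0$ is immediate: the left side is $(−xq;q)_0=1$ and the only surviving term on the right is $m=0$, contributing $\begin{bmatrix} 0 \\ 0 \end{bmatrix}_q q^0 x^0=1$. For the inductive step, assuming the formula for $n$, I would write
\[
(-xq;q)_{n+1} = (1+xq^{n+1})\sum_{m=0}^{n} \begin{bmatrix} n \\ m \end{bmatrix}_q q^{\binom{m+1}{2}} x^m
\]
and expand. Splitting this into two sums and re-indexing the second via $m\mapsto m-1$ yields, for each $1\le m\le n$, the coefficient of $x^m$ equal to
\[
\begin{bmatrix} n \\ m \end{bmatrix}_q q^{\binom{m+1}{2}} + \begin{bmatrix} n \\ m-1 \end{bmatrix}_q q^{\binom{m}{2}+n+1}.
\]
Factoring out $q^{\binom{m+1}{2}}$ and using the identity $\binom{m}{2}+n+1-\binom{m+1}{2}=n+1-m$, this coefficient becomes $q^{\binom{m+1}{2}}\bigl(\begin{bmatrix} n \\ m \end{bmatrix}_q+q^{n+1-m}\begin{bmatrix} n \\ m-1 \end{bmatrix}_q\bigr)$, which by $q$-Pascal is exactly $q^{\binom{m+1}{2}}\begin{bmatrix} n+1 \\ m \end{bmatrix}_q$.

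The boundary terms $m=0$ and $m=n+1$ are straightforward: the $m=0$ term comes solely from the first sum and equals $1$, matching the right side, while the $m=n+1$ term arises only from the shifted second sum and gives $q^{\binom{n+1}{2}+n+1}x^{n+1}=q^{\binom{n+2}{2}}x^{n+1}$, matching the top term of the target expansion. The main obstacle, if any, is purely bookkeeping, namely keeping the exponents of $q$ aligned through the index shift; the key algebraic identity $\binom{m+1}{2}-\binom{m}{2}=m$ does all the real work. An alternative route I would keep in mind, if the induction felt opaque, is the combinatorial one: interpret $\begin{bmatrix} n \\ m \end{bmatrix}_q$ as the generating function for partitions fitting in an $m\times(n-m)$ box and $q^{\binom{m+1}{2}}$ as the generating function for the staircase, so that $q^{\binom{m+1}{2}}\begin{bmatrix} n \\ m \end{bmatrix}_q$ enumerates strict partitions with parts in $[1,n]$ of length $m$; the identity then reads as the obvious factorization of the generating function $\prod_{i=1}^{n}(1+xq^i)$ by the number of parts.
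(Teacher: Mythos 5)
Your proof is correct. Note that the paper does not actually prove this statement: it is quoted as the well-known $q$-binomial theorem with a citation to Andrews, so there is no in-paper argument to compare yours against. Your induction on $n$ via the factorization $(-xq;q)_{n+1}=(-xq;q)_n(1+xq^{n+1})$ and the $q$-Pascal recurrence $\begin{bmatrix} n+1 \\ m \end{bmatrix}_q = \begin{bmatrix} n \\ m \end{bmatrix}_q + q^{n+1-m}\begin{bmatrix} n \\ m-1 \end{bmatrix}_q$ is the standard textbook verification, and the exponent bookkeeping checks out: $\binom{m}{2}+n+1-\binom{m+1}{2}=n+1-m$, and the extreme terms $m=0$ and $m=n+1$ match. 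Your alternative combinatorial reading --- that $q^{\binom{m+1}{2}}\begin{bmatrix} n \\ m \end{bmatrix}_q$ enumerates $m$-subsets $S\subseteq[n]$ by $q^{\sum_{i\in S}i}$, so the identity is just the subset expansion of $\prod_{i=1}^n(1+xq^i)$ --- is also valid, and is in fact the form in which the paper actually uses the result later (compare Lemma \ref{lemmino}, where $\sum_{S\subseteq[n]}p^{|S|}q^{\sum_{i\in S}i}=(-pq;q)_n$ appears explicitly). Either route is a complete proof.
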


\subsection{Coxeter groups of type $B$ and $D$}

We denote by $B_{n}$ the group of all bijections $\be$ of the set
$[-n,n]\setminus \{0\}$ onto itself such that
\[\be(-i)=-\be(i)\]
for all $i \in [-n,n]\setminus \{0\}$, with composition as the
group operation. This group is usually known as the group of {\em
signed permutations} on $[n]$, or as the {\em hyperoctahedral
group} of rank $n$.  If
$\be \in B_{n}$ then we write $\be=[\be(1),\dots,\be(n)]$ and we call this the
{\em window} notation of $\be$. As set of generators for $B_n$ we
take $S_B:=\{s_1^B,\ldots,s_{n-1}^B,s_0^B\}$ where for $i
\in[n-1]$
\[s_i^B:=[1,\ldots,i-1,i+1,i,i+2,\ldots,n] \;\; {\rm and} \;\; s_0^B:=[-1,2,\ldots,n].\]
It is well known that $(B_n,S_B)$ is a Coxeter system of type $B$ (see e.g., \cite[\S 8.1]{BB}).
\begin{figure}[htdb!]
\centering
\includegraphics[scale=.5]{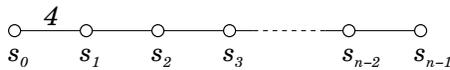}
\caption{The Dynkin diagram of $B_n$}\label{graphB}
\end{figure}

To give an explicit combinatorial description of the length function $\ell_B$ of $B_n$ with respect to $S_B$, we need the following statistics.
For $\be \in B_n$ we let
\begin{eqnarray*}
\me_1(\be) & := &   |\{i \in [n] \mid \be(i)<0\}|, \ \ {\rm and}\\
\me_{2}(\be)  &:=&  \left|\left\{\{i,j\} \in {[n] \choose 2} \mid \be(i)+\be(j)<0 \right\}\right|.
\end{eqnarray*}
Note that, if $\be \in B_n$, 
\begin{equation} \label{som}
\me_{1}(\be)+\me_{2}(\be)=-\sum_{\{i \in [n] \mid \be(i)<0\}} \be(i).
\end{equation}
\noindent For example if $\be=[-3,1,-6,2,-4,-5] \in B_{6}$ then
$\me_1(\be)=4$, and $\me_2(\be)=14$. 
\smallskip

The following characterizations of the length function, and of the right descent set of $\be \in B_n$ are well known \cite{BB}.

\begin{prop} Let $\be \in B_n$. Then
\begin{eqnarray*}
\ell_B(\be)&=&\inv(\be)+\me_1(\be)+\me_2(\be), \ \ and\\
\Des_B(\be)&=&\{i \in [0,n-1] \mid \be(i) > \be(i+1)\},
\end{eqnarray*}
where  $\be(0):=0$.
\end{prop}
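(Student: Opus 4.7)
The plan is to use the standard root-system realization of $B_n$. Take as positive roots $\Phi^+ := \{e_i : 1 \leq i \leq n\} \cup \{e_j - e_i : 1 \leq i < j \leq n\} \cup \{e_j + e_i : 1 \leq i < j \leq n\}$ in $\mathbb{R}^n$, with simple roots $\al_0 = e_1$ (attached to $s_0^B$) and $\al_i = e_{i+1} - e_i$ (attached to $s_i^B$, $1 \leq i \leq n-1$), and let $B_n$ act by $\be \cdot e_i := \sign(\be(i))\, e_{|\be(i)|}$. The argument rests on two general Coxeter-theoretic facts (see e.g.\ \cite{BB}):
\[
\ell_B(\be) = |\{\alpha \in \Phi^+ : \be \cdot \alpha \in \Phi^-\}|,
\qquad s_i^B \in \Des_B(\be) \iff \be \cdot \al_i \in \Phi^-.
\]

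For the length formula I would split $\Phi^+$ into its three natural families and count each contribution separately. The short-root family contributes $\me_1(\be)$, since $\be \cdot e_i$ is negative precisely when $\be(i) < 0$. For a pair $i < j$, a short four-case check on the signs of $\be(i)$ and $\be(j)$ shows that $\be(e_j + e_i) \in \Phi^-$ iff $\be(i) + \be(j) < 0$, yielding $\me_2(\be)$, and that $\be(e_j - e_i) \in \Phi^-$ iff $\be(i) > \be(j)$, yielding $\inv(\be)$. Summing gives $\ell_B(\be) = \inv(\be) + \me_1(\be) + \me_2(\be)$.

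For the descent set I would specialize the same analysis to the simple roots. For $i \in [n-1]$, $\be \cdot \al_i = e_{\be(i+1)} - e_{\be(i)}$, which by the case check above is negative exactly when $\be(i) > \be(i+1)$. For $i = 0$, $\be \cdot \al_0 = \sign(\be(1))\, e_{|\be(1)|}$ is negative exactly when $\be(1) < 0$, equivalently $0 = \be(0) > \be(1)$. With the convention $\be(0) := 0$ both conditions unify into the single criterion $\be(i) > \be(i+1)$ on $i \in [0, n-1]$.

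The only point demanding care is the sign case analysis for the long roots in the length step: for each of the four combinations of $\sign(\be(i))$ and $\sign(\be(j))$ one must rewrite $e_{\be(j)} \pm e_{\be(i)}$ in the canonical form $\pm e_a \pm e_b$ with $a, b > 0$ and verify, in both directions, that the resulting root is negative exactly under the claimed inequality on $\be(i), \be(j)$. Once this bookkeeping is carried out, both assertions follow at once from the two general facts above.
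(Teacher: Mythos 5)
Your proposal is correct. Note that the paper itself gives no proof of this proposition: it is stated as well known with a pointer to \cite[\S 8.1]{BB}, where the length formula is established by a generator-by-generator verification (one checks that the candidate statistic changes by exactly $\pm 1$ under right multiplication by each $s\in S_B$ and invokes a general characterization of the length function). Your route through the geometric representation --- realizing $\ell_B(\be)$ as $|\{\alpha\in\Phi^+ : \be\cdot\alpha\in\Phi^-\}|$ and splitting $\Phi^+$ into the families $\{e_i\}$, $\{e_j-e_i\}$, $\{e_j+e_i\}$ --- is the standard alternative and is carried out correctly: the choice of simple roots matches the generators $s_0^B,\dots,s_{n-1}^B$ of the paper, the action $\be\cdot e_i=\sign(\be(i))\,e_{|\be(i)|}$ is a genuine left action compatible with composition, and the four-case sign check does give ``$\be\cdot(e_j+e_i)<0$ iff $\be(i)+\be(j)<0$'' and ``$\be\cdot(e_j-e_i)<0$ iff $\be(i)>\be(j)$'' (using that $|\be(i)|\neq|\be(j)|$ for $i\neq j$, so no boundary case arises). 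The descent statement, including the unification of the $i=0$ case via $\be(0):=0$, follows exactly as you say from $s\in\Des_B(\be)\iff\be\cdot\al_s\in\Phi^-$. What the root-system argument buys over the inductive verification in \cite{BB} is that both the length formula and the descent description drop out of a single computation of the inversion set; the cost is having to set up the reflection representation, which your proposal does properly.
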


\bigskip

We denote by $D_{n}$ the subgroup of $B_{n}$ consisting of all the
signed permutations having an even number of negative entries in
their window notation, more precisely
\[ D_{n} := \{\g \in B_{n} \mid \me_{1}(\g)\equiv 0 \; ({\rm mod} \; 2 ) \}. \]
It is usually called the {\em even-signed permutation group}. As a set of generators for $D_n$ we take
$S_D:=\{s_{0}^D,s_{1}^D,\dots,s_{n-1}^D\}$ where for $i \in [n-1]$
\[s_i^D:=s_i^B \;\; {\rm and} \;\; s_{0}^D:=[-2,-1,3,\ldots,n].\]
\begin{figure}[htdb!]
\centering
\includegraphics[scale=.5]{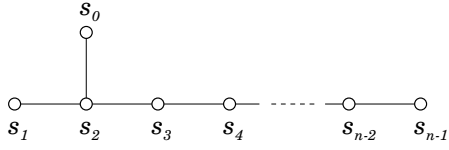}
\caption{The Dynkin diagram of $D_n$}
\label{graphD}
\end{figure}

There is a well known direct combinatorial way to compute the length, and the right descent set of $\g \in D_{n}$, (see, e.g., \cite[\S 8.2]{BB}). 
\begin{prop}Let $\g \in D_n$. Then
\begin{eqnarray*}\label{lD} 
\ell_D(\g)&=&\inv(\g) + \me_{2}(\g),\; \; {\rm and} \\
\Des_D(\g)&=&\{i \in [0,n-1] \mid \g(i)>\g(i+1)\},
\end{eqnarray*}
where $\g(0):=-\g(2)$.
\end{prop}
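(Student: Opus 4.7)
The plan is to invoke the general Coxeter-group identities
\[
\ell(w) \;=\; \bigl|\{\al \in \Phi^+ : w\cdot\al \in \Phi^-\}\bigr|, \qquad \ell(w s_i) < \ell(w) \iff w\cdot\al_i \in \Phi^-,
\]
applied to the geometric realization of $(D_n, S_D)$ on $\mathbb{R}^n$ via $\g \cdot e_k := \sign(\g(k))\,e_{|\g(k)|}$.  With the paper's labelling of generators, the simple roots are
\[
\al_0 = e_1 + e_2, \qquad \al_i = e_{i+1} - e_i \quad (i \in [n-1])
\]
(verified directly from $s_i^D\cdot\al_i = -\al_i$), and the corresponding positive system is
\[
\Phi^+ \;=\; \{e_j - e_i : 1 \leq i < j \leq n\}\;\cup\;\{e_i + e_j : 1 \leq i < j \leq n\}.
\]

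For the length formula I split the count over the two families of $\Phi^+$.  A direct analysis of the four possible sign combinations of $\g(i), \g(j)$ gives, for $i < j$,
\[
\g\cdot(e_j - e_i)\in\Phi^- \iff \g(i) > \g(j), \qquad \g\cdot(e_i + e_j)\in\Phi^- \iff \g(i) + \g(j) < 0.
\]
The first condition holds for $\inv(\g)$ pairs and the second for $\me_2(\g)$ pairs, so summing yields $\ell_D(\g) = \inv(\g) + \me_2(\g)$.

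For the descent set, evaluate the right-descent criterion on each simple root.  For $i\in[n-1]$ it gives $\ell_D(\g s_i^D) < \ell_D(\g)$ iff $\g(i) > \g(i+1)$, and for $i=0$ it gives $\ell_D(\g s_0^D) < \ell_D(\g)$ iff $\g(1) + \g(2) < 0$, i.e., $-\g(2) > \g(1)$, which under the convention $\g(0) := -\g(2)$ reads $\g(0) > \g(1)$.  Identifying the generator $s_i^D$ with the integer $i$ yields the stated formula for $\Des_D(\g)$.

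The delicate point is identifying the positive system compatible with the paper's labelling of generators: since $s_0^D$ acts as a simultaneous swap-and-negation of positions $1$ and $2$, its simple root is the even-type root $e_1 + e_2$ rather than a coordinate root, and forcing all simple roots to be positive leads to the non-Bourbaki convention in which $e_j - e_i$ (with $j > i$) is positive rather than $e_i - e_j$.  Once this is set up, the remaining work is an elementary sign book-keeping; a purely inductive approach tracking the variation of $\inv + \me_2$ under right multiplication by each generator would reach the same conclusion more laboriously.
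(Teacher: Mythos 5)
Your argument is correct. The paper itself offers no proof of this proposition---it is stated as well known with a pointer to Bj\"orner--Brenti, where the standard treatment is the combinatorial induction you mention in your last sentence: one checks that right multiplication by each generator changes $\inv+\me_2$ by exactly $\pm 1$, deduces $\inv+\me_2\leq \ell_D$, and then shows that whenever $\inv(\g)+\me_2(\g)>0$ some generator strictly decreases the statistic, which closes the induction and simultaneously yields the descent description. Your root-system route is a genuinely different (and cleaner) path: the identification of the simple roots $\al_0=e_1+e_2$, $\al_i=e_{i+1}-e_i$ compatible with the paper's labelling is verified correctly, the resulting positive system $\{e_j-e_i\}_{i<j}\cup\{e_i+e_j\}_{i<j}$ is right, and the four-way sign analysis showing $\g\cdot(e_j-e_i)\in\Phi^-$ iff $\g(i)>\g(j)$ and $\g\cdot(e_i+e_j)\in\Phi^-$ iff $\g(i)+\g(j)<0$ checks out (the case $\g(i)+\g(j)=0$ cannot occur since $|\g(i)|\neq|\g(j)|$). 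What your approach buys is that the length formula and the descent-set formula drop out of the same computation, via $\ell(\g)=|\{\al\in\Phi^+:\g\cdot\al\in\Phi^-\}|$ and $\ell(\g s)<\ell(\g)\iff\g\cdot\al_s\in\Phi^-$, with no induction; what it costs is the need to justify that the chosen representation is the geometric one with the stated simple system, i.e., that each $s_i^D$ acts as the reflection in $\al_i$ and that $\{\al_0,\ldots,\al_{n-1}\}$ is a base for $\{\pm e_i\pm e_j\}_{i<j}$---you flag this as the delicate point, and it would be worth one explicit line of verification, but it is routine.
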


\subsection{Negative statistics}
 
In \cite{ABR1}, Adin, Brenti and Roichman introduced the following statistics on $B_n$. 
For $\be \in B_{n}$ let 
\[\NDes(\be):=\Des(\be) \biguplus \{-\be(i) \mid  \be(i)<0 \},\] 
and define
\begin{eqnarray*}
\nmaj(\be):= \sum_{i \in \NDes(\be)}i , \ \ {\rm and} \ \
\ndes(\be):= |\NDes(\be)|.
\end{eqnarray*}
It follows from (\ref{som}) that
\begin{eqnarray}
\label{nmaj} \nmaj(\be)&=&\maj(\be)+\me_1(\be) +\me_2(\be), \ \ {\rm and} \\
\label{ndes} \ndes(\be)&=&\des(\be)+\me_1(\be).
\end{eqnarray}
For the element $\be=[-3,1,-6,2,-4,-5] \in B_{6}$, $\nmaj(\be)=29$, and $\ndes(\be)=7$.

\bigskip

In \cite{B}, a notion of descent multiset for $\g\in D_n$ is introduced
\[\DDes(\g):=\Des(\g) \biguplus \{ -\g(i)-1 \mid \g(i)<0 \} \setminus \{0\},\] 
and the following statistics are defined 
\begin{eqnarray*}
\dmaj(\g):= \sum_{i \in \DDes(\g)}i ,  \ \ {\rm and} \ \ 
\ddes(\g):= |\DDes(\g)|.
\end{eqnarray*}
It easily follows that
\begin{eqnarray}
\label{dmaj} \dmaj(\g)&=& \maj(\g) + \me_{2}(\g), \ \ {\rm and}\\
\label{ddes} \ddes(\g)&=&\des(\g)+ \me_1(\g)+\epsilon(\g),
\end{eqnarray}
where
\begin{equation}
\epsilon(\g) := \left \{\begin{array}{ll}
-1 & \mbox{if $1 \not \in \g([n])$} \\
 0 & \mbox{if $1  \in \g([n])$}.
\end{array} \right. 
\end{equation}
For example if $\g=[-4,1,3,-5,-2,-6] \in D_{6}$ 
then $\dmaj(\g)=21$, and $\ddes(\g)=5$.
\bigskip

The statistics $\nmaj$ and $\dmaj$ are usually called {\em negative major indices}; $\ndes$ and $\ddes$ {\em negative descent numbers} for $B_n$ and $D_n$, respectively. The negative major indices are Mahonian statistics, namely they are equidistributed with the length over the group,
$$ \sum_{\be \in B_{n}} q^{\nmaj(\be)} = \sum_{\be \in B_{n}} q^{\ell_B(\g)}, \;\;\;\; {\rm and} \;\;\;\; \sum_{\g \in D_{n}} q^{\dmaj(\g)} = \sum_{\g \in D_{n}} q^{\ell_D(\g)}.$$
The pairs $(\ndes,\nmaj)$ and $(\ddes,\dmaj)$ give generalizations to $B_n$ and $D_n$ of a famous identity of Carlitz, see \cite[Theorem 3.2]{ABR1}, and \cite[Theorem  3.4]{B}.

\subsection{Quotients of Coxeter groups}

To show some of the next results we will need of the following decomposition that comes
from the general theory of Coxeter group. We refer the reader to \cite{BB} for any undefined notation. 

Let $(W,S)$ be a Coxeter
system, for $J \subseteq S$ we let $W_{J}$ be the {\em parabolic subgroup} of $W$
generated by $J$, and
$$
W^{J} := \{ w \in W \mid \ell(ws) > \ell(w) \; \; \mbox {for all} \; \; s \in J \},
$$
the set of minimal left coset representatives of $W_{J}$, or the {\em (right) quotient}. The quotient $W^{J}$ is a poset
according to the Bruhat order. The following is well known (see \cite[\S 2.4]{BB}).
\begin{prop}
\label{2.1} Let $(W,S)$ be a Coxeter system, and let $J \subseteq S$. Then:
\begin{itemize}
\item[i)] Every $w \in W$ has a unique factorization
$w=w^{J}w_{J}$ such that $w^{J} \in W^{J}$ and $w_{J} \in W_{J}$.
\item[ii)] For this factorization $\ell(w)=\ell(w^{J})+\ell(w_{J})$.
\end{itemize}
\end{prop}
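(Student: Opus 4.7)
My plan is to follow the classical argument for the parabolic decomposition, keeping existence and uniqueness conceptually separate and using length additivity as the bridge between them. This is a foundational result in the theory of Coxeter groups, so the approach will mirror the standard proof (as found in \cite{BB}), but I will adapt it to the multiplicative convention $w=w^J w_J$ used in the statement.

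For the existence part of (i), the plan is to pick $w^J$ to be an element of the (left) coset $wW_J$ of minimum length, and then set $w_J := (w^J)^{-1}w \in W_J$. To verify $w^J \in W^J$, I would argue by contradiction: if $\ell(w^J s)<\ell(w^J)$ for some $s\in J$, then $w^J s$ lies in $w^J W_J = wW_J$ but is shorter than $w^J$, contradicting minimality. So $\ell(w^J s)>\ell(w^J)$ for every $s\in J$, i.e.\ $w^J\in W^J$. This simultaneously produces the factorization.

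For part (ii), the key lemma is the following length additivity: if $u\in W^J$ and $v\in W_J$, then $\ell(uv)=\ell(u)+\ell(v)$. The inequality $\leq$ is subadditivity. For the reverse inequality I would induct on $\ell(v)$. If $\ell(v)>0$, write $v=v' s$ with $s\in J$ and $\ell(v')=\ell(v)-1$; by induction $\ell(uv')=\ell(u)+\ell(v')$. If $\ell(uv)<\ell(uv')$, then concatenating a reduced expression for $u$ with one for $v'$ and appending $s$ gives a non-reduced expression, and the Exchange Condition (applied to this word and the generator $s$) forces the deletion of some letter. A letter deleted from the $v'$-part would give $v's = v''$ with $\ell(v'')<\ell(v')$, contradicting $\ell(v's)=\ell(v)=\ell(v')+1$; a letter deleted from the $u$-part would produce $u'\in W$ with $u's=u'' v'^{-1}$, leading to $u = u'(v'sv'^{-1})$ with a factor in $W_J$ of smaller length, and one extracts $us'=u$ for some $s'\in J$ with $\ell(us')<\ell(u)$, contradicting $u\in W^J$. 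Hence $\ell(uv)=\ell(u)+\ell(v)$, and specializing to $(u,v)=(w^J,w_J)$ proves (ii).

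For uniqueness in (i), suppose $w = u_1 v_1 = u_2 v_2$ with $u_i\in W^J$ and $v_i\in W_J$. Then $u_1 = u_2(v_2 v_1^{-1})$ with $v_2 v_1^{-1}\in W_J$, so the length formula from (ii) yields $\ell(u_1)=\ell(u_2)+\ell(v_2 v_1^{-1})$; reversing the roles gives $\ell(u_2)=\ell(u_1)+\ell(v_1 v_2^{-1})$. Adding these forces $\ell(v_2 v_1^{-1})=0$, hence $v_1=v_2$ and therefore $u_1=u_2$. The main obstacle is really the length additivity lemma: it is the only step that genuinely uses the Coxeter-theoretic Exchange Condition rather than formal coset manipulation, and care is needed to track which part of the concatenated reduced word the Exchange Condition acts on.
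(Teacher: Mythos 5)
The paper offers no proof of this proposition --- it simply cites \cite[\S 2.4]{BB} --- and your argument follows the same standard architecture as the proof there (minimal coset representative, length additivity via the Exchange Condition, uniqueness by length bookkeeping). There is, however, a genuine gap at the one step you yourself identify as the crux. In the case of your additivity lemma where the Exchange Condition deletes a letter from the $u$-part, you arrive at $u=u'\,(v's v'^{-1})$ with $v's v'^{-1}\in W_J$ nontrivial and $\ell(u')<\ell(u)$, and then assert that ``one extracts'' some $s'\in J$ with $\ell(us')<\ell(u)$, contradicting $u\in W^J$ (the displayed formulas $u's=u''v'^{-1}$ and $us'=u$ do not parse as written, but this is the evident intent). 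That extraction is not justified and is not elementary: writing $t:=v'sv'^{-1}=t_1\cdots t_r$ reduced with $t_k\in J$ and following the chain $u,\ ut_1,\ ut_1t_2,\dots$ only shows that the length must drop at \emph{some} step, i.e.\ $\ell(\tilde u t_k)<\ell(\tilde u)$ for some intermediate $\tilde u=ut_1\cdots t_{k-1}$, which contradicts nothing about $u$ itself. What you actually need --- that $u\in W^J$ forces $\ell(ut)>\ell(u)$ for every reflection $t\in W_J$, equivalently that $u$ is of minimal length in $uW_J$ --- is essentially the content being proved, so as written the argument is circular at this point. (It can be obtained from the geometric representation, since $u(\alpha_t)$ is a nonnegative combination of the positive roots $u(\alpha_{s'})$, $s'\in J$, but that is a different tool from the purely combinatorial Exchange Condition you invoke.)

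The standard repair is to prove the additivity lemma not for an arbitrary $u\in W^J$ but for $u$ chosen of minimal length in the coset $wW_J$: then your Case B yields $u'\in uW_J$ with $\ell(u')<\ell(u)$, contradicting minimality outright, with no extraction needed. From additivity for the minimal representative one deduces that it is the \emph{only} element of $W^J$ in its coset: if $u_1=uv_1\in W^J$ with $v_1\in W_J$, $v_1\neq e$, write $v_1=v_1's$ reduced with $s\in J$ and compute $\ell(u_1 s)=\ell(uv_1')=\ell(u)+\ell(v_1)-1<\ell(u_1)$, contradicting $u_1\in W^J$. This gives uniqueness directly and, retroactively, validates the additivity statement for all of $W^J$, so that your (correct) uniqueness bookkeeping and your existence step both go through once the lemma is established in this order.
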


As a first application of this decomposition to the groups $B_n$ (and $D_n$), 
let us consider the parabolic subgroup generated by $J:=S_B \setminus \{s_0^B\}$. In this case, by looking at the Dynkin diagram in Figure \ref{graphB}, we obtain that $B_J=S_n$. Moreover it is not hard to see that
\begin{eqnarray}
\label{T} B^J:=B_n^J=\{u \in B_{n} \mid u(1)<u(2)<\ldots<u(n) \}.
\end{eqnarray}
Hence from Proposition \ref{2.1} we get
\begin{equation}
\label{decomp} B_{n}= \biguplus_{\s \in S_{n}}\{ u \s \mid u \in B^J\},
\end{equation}
where $\biguplus$ denotes disjoint union. Note that in the case $D_n$, for $J:=S_D \setminus \{s_0^D\}$, a similar decomposition holds,
 \begin{equation*}
 D_{n}= \biguplus_{\s \in S_{n}}\{ u \s \mid u \in D^J\},
\end{equation*}
where once again $D_J=S_n$, and $D^J=\{u \in D_{n} \mid u(1)<u(2)<\ldots<u(n) \}.$

\begin{rem}
	The construction or right quotient can be mirrored, by considering {\em left descents}. Let $J\subseteq S$. A {\em left quotient} of $W$ is defined by
	$$ ^J W:=\{w \in W \mid \ell(s w)>\ell(w)\ {\rm for \ all} \ s \in J \}.$$
	Proposition \ref{2.1} holds for left quotients too, but the factorization in $i)$ becomes $w=w_J \cdot ^J\!w$, with $^J\! w \in$ $^J\! W$.
	Left and right quotients are isomorphic posets, by means of the inversion map. In the next section, we will work with subsets of $B_n$ and $D_n$ that are left quotients. They are called descent classes for reasons that will be immediately clear.
\end{rem}

\section{Combinatorial description of descent classes}

Let us fix a subset of descents $M:=\{m_1, m_2, \ldots, m_t\}_< \subseteq [0,n-1]$. The set
\begin{equation}\label{Bdescentclass}
B(M):=\{\be \in B_n \mid \Des_B(\be^{-1}) \subseteq M\},
\end{equation}
is usually called a $B$-{\em descent class}. Note that this set is nothing but a left quotient of $B_n$. More precisely, it is the one corresponding to the subset $J=S \setminus \tilde{M}$, where $\tilde{M}:=\{s_i \mid i \in M\}$. The following result can be found in \cite[Lemma 4.1]{ABR2}.

\begin{lem}\label{Bcombclass} Let $\be \in B_n$, and $M=\{m_1,\ldots,m_t\}_{<}\subseteq [0,n-1]$. Let $m_{t+1}:=n$. Then $\Des_B(\be^{-1})\subseteq M$ if and only if there exist (unique) integers $r_1,\ldots, r_t$ satisfying $m_i \leq r_i \leq m_{i+1}$ for all $i$, and such that $\be$ is a shuffle of the following increasing sequences:
\begin{equation}\label{shuffleB}
	\begin{array}{l}
	(1,2,\ldots,m_1), \\
	\left(-r_1,-r_1+1,\ldots,-(m_1+1)\right), \\
	(r_1+1,r_1+2,\ldots,m_2), \\
	\hspace{2cm}  \vdots \\
	\big(-r_t,-r_t+1,\ldots,-(m_t+1)\big), \\
	(r_t+1,r_t+2,\ldots,n).
	\end{array}
\end{equation}
Some of these sequences may be empty, if $r_i=m_i$ or $r_i=m_{i+1}$ for some $i$, or if $m_i=0$.
\end{lem}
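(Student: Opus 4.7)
The plan is to re-express the condition $\Des_B(\be^{-1}) \subseteq M$ as monotonicity constraints on the one-line notation of $\be^{-1}$, and then translate these constraints into the structure of the window of $\be$ itself. Setting $m_0 := 0$ and using the convention $\be^{-1}(0) := 0$, the hypothesis $\Des_B(\be^{-1}) \subseteq M$ is equivalent to saying that for each $j \in [0,t]$ the sequence $\be^{-1}(m_j), \be^{-1}(m_j+1), \ldots, \be^{-1}(m_{j+1})$ is strictly increasing. The $j=0$ chain starts at $0$, so $\be^{-1}(1), \ldots, \be^{-1}(m_1)$ are all positive; for $j \ge 1$, the chain may begin with some negative entries and then switch to positive ones. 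I would define $r_j \in [m_j, m_{j+1}]$ to be the largest index with $\be^{-1}(r_j) < 0$ (setting $r_j := m_j$ when the block has no negative entries), so that $\be^{-1}(i) < 0$ on $[m_j+1, r_j]$ and $\be^{-1}(i) > 0$ on $[r_j+1, m_{j+1}]$.

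Next I would translate signs of $\be^{-1}$ into positions in the window of $\be$, using the signed-permutation identity $\be^{-1}(-k) = -\be^{-1}(k)$: if $\be^{-1}(i) > 0$ then $i$ itself appears in the window at position $\be^{-1}(i)$, while if $\be^{-1}(i) < 0$ then $-i$ appears at position $-\be^{-1}(i)$. Combined with the monotonicity $\be^{-1}(m_j+1) < \cdots < \be^{-1}(m_{j+1})$ inside each block, this yields two increasing subsequences of $\be$'s window: on the positive side, the values $r_j+1, r_j+2, \ldots, m_{j+1}$ occur at strictly increasing positions; on the negative side, $-\be^{-1}(m_j+1) > \cdots > -\be^{-1}(r_j) > 0$, so reading the window from left to right, the values $-r_j, -r_j+1, \ldots, -(m_j+1)$ also appear in that order. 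Taking all blocks $j = 0, 1, \ldots, t$ (with $r_0 := 0$, so the $j=0$ block contributes only the sequence $(1, \ldots, m_1)$) gives exactly the $2t+1$ monotone sequences listed in (\ref{shuffleB}); since every element of $\{\pm 1, \ldots, \pm n\}$ is represented exactly once across these, $\be$ is a shuffle of them.

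For the converse, I would start from an arbitrary shuffle $\be$ of the sequences in (\ref{shuffleB}) and reverse the above argument: the $j$-th negative sequence contributes negative $\be^{-1}$-entries that increase on $[m_j+1, r_j]$, the $j$-th positive sequence contributes positive $\be^{-1}$-entries that increase on $[r_j+1, m_{j+1}]$, and negatives lie below positives, so $\be^{-1}$ is strictly increasing on each $[m_j, m_{j+1}]$ (with $\be^{-1}(0) = 0$); this gives $\Des_B(\be^{-1}) \subseteq M$. Uniqueness of the $r_j$ is then immediate: $r_j$ is forced to equal the largest $i \in [m_j+1, m_{j+1}]$ with $-i \in \be([n])$ (and $r_j = m_j$ if no such $i$ exists), a quantity determined by $\be$ alone.

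The main obstacle is purely notational: handling the boundary cases cleanly, in particular $m_1 = 0$, $r_i = m_i$, and $r_i = m_{i+1}$, which correspond to some of the sequences in (\ref{shuffleB}) being empty, and fitting the $j = 0$ block into the same template via the convention $r_0 = 0$ (its negative part is absent precisely because the preceding pivot $\be^{-1}(0) = 0$ is non-negative, which forbids a descent at $0$ unless $0 \in M$). Beyond this bookkeeping, no deeper idea is needed than the dictionary between signed $\be^{-1}$ values and positions of $\pm i$ in the window of $\be$.
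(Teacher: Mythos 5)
Your argument is correct in substance. The paper itself offers no proof of this lemma (it is quoted from Lemma 4.1 of the Adin--Brenti--Roichman paper), so there is no in-paper argument to compare against; but the route you take --- translate $\Des_B(\be^{-1})\subseteq M$ into increasing runs of $\be^{-1}$, split each run at its sign change to define $r_j$, and use the dictionary between the sign of $\be^{-1}(i)$ and the position of $\pm i$ in the window of $\be$ --- is the natural one, and both directions as well as the uniqueness of the $r_j$ go through as you describe.

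One imprecision you should repair before writing this up. The condition $\Des_B(\be^{-1})\subseteq M$ is \emph{not} equivalent to ``for each $j\in[0,t]$ the sequence $\be^{-1}(m_j),\be^{-1}(m_j+1),\ldots,\be^{-1}(m_{j+1})$ is strictly increasing'': taken literally, concatenating these overlapping runs would force $\be^{-1}(0)<\be^{-1}(1)<\cdots<\be^{-1}(n)$, i.e.\ $\Des_B(\be^{-1})=\emptyset$. The correct reformulation is $0=\be^{-1}(0)<\be^{-1}(1)<\cdots<\be^{-1}(m_1)$ together with, for each $j\ge 1$, $\be^{-1}(m_j+1)<\cdots<\be^{-1}(m_{j+1})$, with no constraint across the allowed descent position $m_j$. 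Everything else in your proof in fact only uses these weaker, correct runs --- for instance you let the $j$-th chain ``begin with some negative entries'' even though the previous chain ends with positive ones, which would be impossible under the literal reading --- so this is a statement-level slip rather than a gap in the argument. The same slip recurs in your converse, where you assert that $\be^{-1}$ is strictly increasing on $[m_j,m_{j+1}]$ when what you obtain, and all that you need, is increasingness on $[m_j+1,m_{j+1}]$.
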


The following one is an explicit description of $D$-descent classes 
\begin{equation}\label{Ddescentclass}
D(M):=\{\g \in D_n \mid \Des_D(\g^{-1}) \subseteq M\}.
\end{equation}
\begin{lem}\label{Dcombclass} Let $\g \in D_n$, and $M=\{m_1,\ldots,m_t\}_{<}\subseteq [0,n-1]$. Let $m_{t+1}:=n$.  Then $\Des_D(\g^{-1})\subseteq M$ if and only if there exist (unique) integers $r_1,\ldots, r_t$ satisfying $m_i \leq r_i \leq m_{i+1}$ for all $i$, and such that $\g$ is a shuffle of the following increasing sequences. There are three cases, and six possible ``blocks" of sequences.
\begin{itemize}
\item[1)] If $0 \in M$: $(m_1=0)$
\begin{equation}\label{shuffleD-O}
	\begin{array}{l}
	(-r_1,-r_1+1,\ldots,-2,-1), \\
	(r_1+1,r_1+2,\ldots,m_2),  \\
	\hspace{2cm} \vdots \\
	\big(-r_t,-r_t+1,\ldots,-(m_t+1)\big),\\ 
	(r_t+1,r_t+2,\ldots,n), \\
	\end{array}
\end{equation}
with \ {$\displaystyle \sum_{i=1}^t(r_i-m_i) \equiv 0 \; ({\rm mod} \ 2)$}.

\item[2)] If $0, 1\not\in M$: (note $m_1\geq 2$)
\begin{equation}\label{shuffleD-01}
	\begin{array}{lll}
	(1,2,\ldots,m_1), & (-1,2,\ldots,m_1) &\\
	\left(-r_1,-r_1+1,\ldots,-(m_1+1)\right), & (-r_1,-r_1+1,\ldots,-(m_1+1)) &\\
	(r_1+1,r_1+2,\ldots,m_2), & (r_1+1,r_1+2,\ldots,m_2) & \\
	\hspace{2cm} \vdots &  \hspace{2cm} \vdots \\
	\big(-r_t,-r_t+1,\ldots,-(m_t+1)\big),& \big(-r_t,-r_t+1,\ldots,-(m_t+1)\big) &\\ 
	(r_t+1,r_t+2,\ldots,n) & (r_t+1,r_t+2,\ldots,n)&
	\end{array}
\end{equation}
with
$$
\begin{array}{lll}
{\displaystyle \sum_{i=1}^t(r_i-m_i) \equiv 0 \; ({\rm mod} \ 2)}; &  {\displaystyle \sum_{i=1}^t(r_i-m_i) \equiv 1 \; ({\rm mod} \ 2).} & 
\end{array}$$
\item[3)] If $0 \not\in M$ and $1 \in M$: (note $m_2\geq 2$, and $r_1\geq 2$)
\begin{small}
\begin{equation}\label{shuffleD-1}
	\begin{array}{llll}
	 (1) &  & (-r_1,\ldots,-2,1) & \\
	(2,3,\ldots,m_2), & (-1,2,3,\ldots,m_2) & (r_1+1,r_1+2,\ldots,m_2)&\\
	\big(-r_2,-r_2+1,\ldots,-(m_2+1)\big), & \big(-r_2,-r_2+1,\ldots,-(m_2+1)\big), & \big(-r_2,-r_2+1,\ldots,-(m_2+1)\big) & \\
	(r_2+1,r_2+2,\ldots,m_3), & (r_2+1,r_2+2,\ldots,m_3), & (r_2+1,r_2+2,\ldots,m_3) & \\
	\hspace{2cm}  \vdots &  \hspace{2cm}  \vdots & \hspace{2cm}  \vdots \\
	\big(-r_t,-r_t+1,\ldots,-(m_t+1)\big),& \big(-r_t,-r_t+1,\ldots,-(m_t+1)\big),& \big(-r_t,-r_t+1,\ldots,-(m_t+1)\big) &\\ 
	(r_t+1,r_t+2,\ldots,n) & (r_t+1,r_t+2,\ldots,n) & (r_t+1,r_t+2,\ldots,n)&
	\end{array}
\end{equation}
\end{small}
with
$$\begin{array}{llll}
 {\displaystyle \sum_{i=2}^t(r_i-m_i) \equiv 0 \; ({\rm mod} \ 2)}; & {\displaystyle \sum_{i=2}^t(r_i-m_i) \equiv 1 \; ({\rm mod} \ 2)}; &   {\displaystyle \sum_{i=1}^t(r_i-m_i) \equiv 0 \; ({\rm mod} \ 2).}& \end{array}$$
\end{itemize}
Some of these sequences may be empty, if $r_i=m_i$ or $r_i=m_{i+1}$ for some $i$, or if $m_i=0$.
\end{lem}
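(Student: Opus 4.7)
The plan is to reduce the problem to Lemma~\ref{Bcombclass} and then handle the two ways in which $D_n$ differs from $B_n$: the parity constraint $\me_1(\gamma) \equiv 0 \pmod{2}$ and the modified descent criterion at index $0$. I first observe that $\Des_B(\gamma^{-1})$ and $\Des_D(\gamma^{-1})$ coincide on $\{1,\ldots,n-1\}$, while at $i=0$ one has $0 \in \Des_B(\gamma^{-1}) \Leftrightarrow \gamma^{-1}(1)<0$ and $0 \in \Des_D(\gamma^{-1}) \Leftrightarrow \gamma^{-1}(1)+\gamma^{-1}(2)<0$, using $\gamma^{-1}(0):=-\gamma^{-1}(2)$. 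Moreover, in a $B$-shuffle of the form (\ref{shuffleB}) with parameters $(r_1,\ldots,r_t)$, the values placed in negative positions are exactly those in the intervals $[m_i+1,r_i]$; hence $\me_1(\gamma)=\sum_{i=1}^t(r_i-m_i)$ and the condition $\gamma \in D_n$ becomes $\sum_i(r_i-m_i)\equiv 0 \pmod{2}$.

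Case 1 ($0\in M$, so $m_1=0$): any behaviour of $\gamma^{-1}$ at index $0$ is permitted, so $\Des_D(\gamma^{-1})\subseteq M$ is equivalent to $\Des_B(\gamma^{-1})\subseteq M$, and Lemma~\ref{Bcombclass} gives (\ref{shuffleB}); since the empty sequence $(1,\ldots,m_1)$ disappears this is exactly (\ref{shuffleD-O}), with the parity condition expressing $\gamma \in D_n$. Case 2 ($0,1\notin M$, so $m_1\geq 2$): I split on the sign of $\gamma^{-1}(1)$. If $\gamma^{-1}(1)>0$, Lemma~\ref{Bcombclass} applies with $M$, the values $1,2$ lie in the initial block $(1,2,\ldots,m_1)$, so $\gamma^{-1}(1)+\gamma^{-1}(2)>0$ is automatic, and I obtain column $1$ of (\ref{shuffleD-01}). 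If $\gamma^{-1}(1)<0$, I apply Lemma~\ref{Bcombclass} to $M':=\{0\}\cup M$; combining $\gamma^{-1}(1)<0$ with $\gamma^{-1}(1)+\gamma^{-1}(2)\geq 0$ forces the first negative-block parameter to equal $1$ (so only $-1$ sits in negative position) and $-1$ to appear before $2$ in the window, collapsing the adjacent blocks $(-1)$ and $(2,3,\ldots,m_1)$ into the single increasing super-block $(-1,2,3,\ldots,m_1)$: this is column $2$ of (\ref{shuffleD-01}), and the extra $-1$ flips the parity.

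Case 3 ($0\notin M,\;1\in M$, so $m_1=1,\;m_2\geq 2$) has three subcases. If $\gamma^{-1}(1)>0$ and the first parameter $r_1=1$, the values $2,\ldots,m_2$ all lie in the positive block $(2,\ldots,m_2)$, giving column $1$ of (\ref{shuffleD-1}). If $\gamma^{-1}(1)>0$ and $r_1\geq 2$, then $-r_1,\ldots,-2$ occupy negative positions and the inequality $\gamma^{-1}(1)+\gamma^{-1}(2)\geq 0$ forces $1$ to appear in the window after $-2$, merging $(1)$ and $(-r_1,\ldots,-2)$ into $(-r_1,\ldots,-2,1)$: column $3$. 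Finally, if $\gamma^{-1}(1)<0$, Lemma~\ref{Bcombclass} applied to $M'=\{0\}\cup M$ and the same analysis force the first two negative-block parameters each to equal $1$ and $-1$ to precede $2$, producing the merged block $(-1,2,3,\ldots,m_2)$: column $2$. In every subcase a direct count of the negatives yields the stated parity condition.

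The main obstacle will be the three merging subcases (column $2$ of (\ref{shuffleD-01}) and columns $2,3$ of (\ref{shuffleD-1})): one must verify that the single inequality $\gamma^{-1}(1)+\gamma^{-1}(2)\geq 0$ really does collapse two successive $B$-shuffle blocks into one increasing super-block, and that the parameter $r_i$ in the statement matches, after the appropriate shift, the parameter $r_i'$ of the $B$-shuffle associated with $M\cup\{0\}$. Once this bookkeeping is settled, the converse implication is immediate: any shuffle of the stated form manifestly satisfies $\Des_D(\gamma^{-1})\subseteq M$ by direct inspection at each index, and the parity condition on $\sum(r_i-m_i)$ guarantees an even number of negative entries, so that $\gamma\in D_n$.
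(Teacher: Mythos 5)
Your proof is correct and takes essentially the same route as the paper's: reduce to Lemma \ref{Bcombclass}, then account for the two differences with the $B_n$ case, namely the parity constraint $\me_1(\g)\equiv 0\ ({\rm mod}\ 2)$ expressed as $\sum_i(r_i-m_i)$, and the $D$-descent condition at $0$, which is governed by the relative positions of $\pm 1$ and $\pm 2$ in the window and produces the merged blocks. Your case analysis is in fact considerably more explicit than the paper's own argument, which only records the $B_2$ descent classes and asserts that the general statement follows.
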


\begin{proof}
	The only difference with respect to the $B_n$ case is for the $0,1$ descents.  They depend on the relative positions of $\pm 1$ and $\pm 2$ in the window notation of $\g$. The following are the $D$-descent classes of all elements of $B_2$. We have that 
	$$\begin{array}{llll}
	D(\emptyset)&=\{[1,2], [-1,2]\}\\
	D(\{0\})&=\{[2,-1], [-2,-1]\}\\
	D(\{1\})&= \{[-2,1], [2,1]\}\\
	D(\{0,1\})&= \{[1,-2], [-1,-2]\}.
	\end{array}$$
	From this, the parity conditions $\sum_{i=1}^t(r_i-m_i) \equiv 0 $ or $\equiv 1$ (mod $2$), and Lemma \ref{Bcombclass} the result follows.
\end{proof}

\begin{rem}\label{rem33}
	Let us fix a subset of descents $M:=\{m_1,m_2,\ldots, m_t\}$. Consider the decompositions of $B_n$ and $D_n$ given by  Proposition \ref{2.1} by using left quotients.
	Recall that $|B_n|=2^n n!$ and that $|D_n|=2^{n-1} n!$. By looking at the Dynkin diagrams in Figure \ref{graphB} and Figure \ref{graphD}, it is easy to derive the following  equalities. 
	\begin{itemize}
	\item[{\small$\clubsuit$}] If $0 \in M$, then $|B(M)|=2\cdot |D(M)|$;
	\item[{\small$\clubsuit$}] If $0,1 \not\in M$, then $|B(M)|=|D(M)|$;
	\item[{\small$\clubsuit$}] If $0\not\in M$, and $1\in M$, then $|B(M)|=m_2\cdot |D(M)|$.
	\end{itemize}
\end{rem}

Now we make explicit these equalities by showing relations between $D$ and $B$ left quotients. 

\begin{prop}\label{L1}
Let $0 \in M$. Then
\begin{itemize}
\item[i)] $B(M)$ splits into the disjoint union
$$B(M)=D(M) \uplus \bar{D}(M),$$ 
where $\bar{D}(M):=\{\bar{\g}=(-\g(1),\g(2),\ldots,\g(n)) \mid \g\in D(M)\}= \{\g\cdot s_0^B \mid \g\in D(M)\}$.
\item[ii)] Moreover
$$\sum_{\be \in B(M)} q^{\ell_D(\be)} = 2 \cdot { \sum_{\g \in D(M)} q^{\ell_D(\g)}}.$$
\end{itemize}
\end{prop}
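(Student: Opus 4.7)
The plan is to combine the shuffle characterizations of $B(M)$ and $D(M)$ provided by Lemmas \ref{Bcombclass} and \ref{Dcombclass}. Since $0 \in M$ forces $m_1=0$, the sequences in (\ref{shuffleB}) and in (\ref{shuffleD-O}) are literally the same list; the sole additional requirement in the $D$-case is the parity condition $\sum_i(r_i - m_i) \equiv 0 \pmod 2$. As this sum equals $\me_1(\g)$, the parity condition is equivalent to $\g \in D_n$. It follows that $D(M) = B(M) \cap D_n$, and that $B(M) \setminus D(M)$ consists of precisely those shuffles whose total number of negative entries is odd.

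The crucial step is to show that the involution $\g \mapsto \bar{\g} = \g s_0^B$, which flips the sign of the first entry, maps $B(M)$ into itself. Given $\g \in B(M)$, the entry $\g(1)$ is the leading element of some non-empty block of the shuffle, so either $\g(1) = -r_i$ with $r_i > m_i$, or $\g(1) = r_i + 1$ with $r_i < m_{i+1}$. In the first case, $\bar{\g}$ is seen to be a shuffle of the same $B$-sequences but with $r_i$ replaced by $r_i - 1$; in the second, with $r_i$ replaced by $r_i + 1$. The inequalities $m_i \leq r_i' \leq m_{i+1}$ are preserved in both cases, so $\bar{\g} \in B(M)$. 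Since $\bar{\cdot}$ also toggles the parity of $\me_1$, it exchanges $D(M) = B(M) \cap D_n$ with $B(M) \setminus D_n$, giving the desired disjoint decomposition $B(M) = D(M) \uplus \bar{D}(M)$. The resulting cardinality $|B(M)| = 2|D(M)|$ agrees with Remark \ref{rem33}$\,\clubsuit$.

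For part (ii), I extend $\ell_D$ to all of $B_n$ via the formula $\ell_D(\be) := \inv(\be) + \me_2(\be)$ and verify that $\ell_D(\bar{\g}) = \ell_D(\g)$. Setting $v := \g(1)$, for each $j \geq 2$ the pair $(1,j)$ is an inversion in $\g$ iff $\g(j) < v$, and contributes to $\me_2(\g)$ iff $\g(j) < -v$. Flipping $v \leftrightarrow -v$ exchanges these two conditions, so the joint contribution of $(1,j)$ to $\inv + \me_2$ is invariant; summing over $j$ yields $\ell_D(\g) = \ell_D(\bar{\g})$. Combined with part (i),
\[
\sum_{\be \in B(M)} q^{\ell_D(\be)} = \sum_{\g \in D(M)} q^{\ell_D(\g)} + \sum_{\g \in D(M)} q^{\ell_D(\bar{\g})} = 2 \sum_{\g \in D(M)} q^{\ell_D(\g)}.
\]

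The main obstacle is the bookkeeping needed to show that $\bar{\cdot}$ preserves $B(M)$: in each of the two cases for $\g(1)$, one must check that the resulting word really is the shuffle of the adjusted block sequences and that the new parameters $(r_1',\ldots,r_t')$ still satisfy the required inequalities. Once this is settled, the parity argument for (i) and the pairwise cancellation computation for (ii) are routine.
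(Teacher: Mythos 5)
Your proof is correct and follows essentially the same route as the paper: identify $D(M)$ inside $B(M)$ via the shuffle descriptions, and use the sign-flip involution $\g\mapsto \g s_0^B$ on the first entry, tracked through the adjustment $r_i\mapsto r_i\pm 1$ of the shuffle parameters. The only (minor, and welcome) differences are that your part~(i) runs the involution on all of $B(M)$ and so does not need the cardinality count of Remark~\ref{rem33}, and your part~(ii) verifies $\ell_D(\bar\g)=\ell_D(\g)$ by the pairwise exchange of the conditions $\g(j)<v$ and $\g(j)<-v$ rather than by the paper's explicit computation $\inv(\bar\g)=\inv(\g)-(\g(1)-1)$, $\me_2(\bar\g)=\me_2(\g)+(\g(1)-1)$.
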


\begin{proof}
Let $\g \in D(M)$. By Lemma \ref{Dcombclass} $\g$ is a shuffle of the sequences in (\ref{shuffleD-O}), and so it can also be obtained as a shuffle of the sequences in (\ref{shuffleB}). Hence $\g \in B(M)$. Now, let us change the sign to the first entry of $\g$, by getting $\bar{\g}$. We are changing the sign of $-r_i$, or of $r_i+1$ for $i \in [t]$, in one of the sequences in (\ref{shuffleD-O}). Note that this operation does not create a new $B$-descent for $\bar{\g}$. Hence $\bar{\g} \in B(M)\setminus D(M)$.  More precisely, $\bar{\g}$ can be obtained by shuffling the same sequences that give $\g$ where the twos involving $r_i$ are replaced either by $(-r_i+1,\ldots,-(m_i+1))$ and $(r_i, r_i+1,\ldots,m_i)$, or by $(-r_i-1,\ldots,-(m_i+1))$ and $(r_i+2,\ldots,m_i)$, depending if it is the sign of $-r_i$, or of $r_i+1$, that changes. All those sequences belong to (\ref{shuffleB}). So $i)$ follows by Remark \ref{rem33}.

Now, it is easy to see that for all  $\g \in D(M)$, one has $\ell_D(\g)=\ell_D(\bar{\g})$. To see that, suppose $\g(1)>0$. Then 
	\begin{eqnarray*}
	\inv(\bar{\g})=\inv(\g) - (\g(1)-1)\;\; {\rm and} \;\;\me_2(\bar{\g})=\me_2(\g)+(\g(1)-1), 
	\end{eqnarray*}
	and so the length $\ell_D$ is stable.  If $\g(1)<0$ a similar computations holds, hence	$ii)$ follows.
\end{proof}
Note that the two subsets $D(M)$ and $\bar{D}(M)$ are not isomorphic as posets, when they are considered as sub-posets of $(B(M),<_B)$, where $<_B$ denote the $B$-Bruhat order . An example is given  for $n=3$ and $M=\{0,2\}$.

\begin{prop}\label{L2}
Let $0,1 \not \in M$. Then 
\begin{itemize}
\item[i)] The map $\varphi : B(M) \longrightarrow D(M)$ defined by
$$ \beta \mapsto \left\{ \begin{array}{ll}
					\be, & \text{if} \;\; \be \in D_n;\\
					s^B_0 \cdot \be, & \text{otherwise},
				\end{array} \right.$$
is a bijection. 
\item[ii)] Moreover
$$\sum_{\be \in B(M)} q^{\ell_D(\be)} =  \sum_{\g \in D(M)} q^{\ell_D(\g)}.$$
\end{itemize}
\end{prop}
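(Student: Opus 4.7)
The plan is to exploit the explicit shuffle descriptions of Lemmas~\ref{Bcombclass} and~\ref{Dcombclass}. Since $0,1\notin M$ we have $m_1\geq 2$, so the first block in any shuffle representing $\be\in B(M)$ is the sequence $(1,2,\ldots,m_1)$ from~(\ref{shuffleB}); in particular the value $1$ appears in the window of $\be$ at some position $k=\be^{-1}(1)\in[n]$, while the value $-1$ does not appear in the window at all.

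For part~(i), I partition $B(M)$ according to the parity of $N:=\sum_{i=1}^t(r_i-m_i)=\me_1(\be)$. When $N$ is even, $\be\in D_n$ and the shuffle presentation already matches the ``first column'' of~(\ref{shuffleD-01}), so $\be\in D(M)$ and $\varphi(\be)=\be$. When $N$ is odd, $\be\notin D_n$; the effect of left-multiplying by $s_0^B$ is to swap the values $1$ and $-1$, which in the window of $\be$ simply replaces the entry $1$ at position $k$ by $-1$. This turns the block $(1,2,\ldots,m_1)$ into $(-1,2,\ldots,m_1)$ while leaving every other block intact, producing a ``second column'' representative of~(\ref{shuffleD-01}); the parity condition $\sum(r_i-m_i)\equiv 1\pmod 2$ required there coincides with the hypothesis that $N$ is odd (the $(r_i,m_i)$ data for the remaining blocks are unchanged). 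Hence $\varphi$ sends $B(M)\cap D_n$ identically onto the first column of $D(M)$ and $B(M)\setminus D_n$ onto the second column via $s_0^B$; since $s_0^B$ is an involution, $\varphi$ is a bijection.

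For part~(ii), I use the fact that the formula $\ell_D=\inv+\me_2$ makes sense on all of $B_n$ and show that the flip $\be\mapsto s_0^B\be$ preserves both summands. A short case analysis of the pair $(k,j)$ with $j\neq k$ does the job: the contribution to $\inv$ would change only if $\be(j)=\pm 1$, and the contribution to $\me_2$ would change only if $\be(j)=-1$; none of these can occur, because $-1$ is absent from the window of $\be$ and the value $1$ sits only at position $k$. Therefore $\ell_D(\be)=\ell_D(\varphi(\be))$ for every $\be\in B(M)$, and~(ii) follows from the bijection of~(i).

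The main obstacle I anticipate is the bookkeeping in part~(i): keeping the parity conditions for the two columns of~(\ref{shuffleD-01}) aligned with the partition of $B(M)$ into $D_n$ and $B_n\setminus D_n$, and verifying that the $(r_i,m_i)$ data really do carry over unchanged under the $\pm 1$ swap. Once that matching is set up, the length-invariance in~(ii) is a routine sign-flip verification.
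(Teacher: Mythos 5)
Your proposal is correct and follows essentially the same route as the paper: both use the shuffle descriptions of Lemmas \ref{Bcombclass} and \ref{Dcombclass} to match $B(M)\cap D_n$ with the first column and $B(M)\setminus D_n$ (via left multiplication by $s_0^B$, which flips the sign of the entry $1$) with the second column of (\ref{shuffleD-01}), and both prove (ii) by checking that $\inv$ and $\me_2$ are unchanged under this sign flip. The only small difference is that you establish bijectivity directly from the involution property of $s_0^B$ and the disjointness of the two columns, whereas the paper concludes via the cardinality count of Remark \ref{rem33}; your version is, if anything, slightly more self-contained.
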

\begin{proof}
Let $\be \in B(M)$, it is a shuffle of the sequences in (\ref{shuffleB}). If $\be \in D_n$, then it is also a shuffle of the sequences in the first block of (\ref{shuffleD-01}). Hence $\be \in D(M)$. Now suppose that $\be \not\in D_n$. Since $0 \not\in \Des_B(\be^{-1})$, then $1 \in \be[n]$.  By multiplying on the left by $s_0^B$, we change the sign of $1$, and so the parity of $\beta$. Hence $s_0^B \cdot\beta \in D_n$. Actually, we obtain an element which is a shuffle of the sequences in second block of (\ref{shuffleD-01}). From Remark \ref{rem33} $i)$ follows. 

Since $\me_2(\g)=\me_2(\varphi(\g))$ and $\inv(\g)=\inv(\varphi(\g))$, one has $\ell_D(\g)=\ell_D(\varphi(\g))$, and so $ii)$ follows.
\end{proof}
The map $\varphi$ is not a poset isomorphism between $(B(M),<_B)$ and $(D(M),<_D)$, where $<_B$ and $<_D$ denote the corresponding Bruhat orders. When $n=3$, and $M=\{2\}$, $B(M)$ is a chain, while in $D(M)$ there are two elements not comparable. 

\begin{prop}\label{L3}
Let $0 \not\in M$, and $1\in M$. Then
\begin{itemize}
\item[i)] $B(M)$ splits as the disjoint union of the following $m_2$ subsets
$$B(M)=D_1(M)\uplus D_{12}(M) \uplus \ldots \uplus D_{12\ldots m_2}(M).$$
Each $D_{1\ldots i}(M)$ is in bijection with $D(M)$, and it is  recursively defined as follows:

1) $D_1(M)$ is obtained by shuffling the sequences defining $D(M)$ where $-1$ (if present) is replaced with $1$.


2) For each $i\geq 2$, $D_{12\ldots i}(M)$ is obtained by shuffling the sequences defining $D_{12\ldots i-1}(M)$ where:
\begin{itemize}
\item[${\small \clubsuit}$] $1$ and $\pm i$ are swiched if they are in the same sequence; 
\item[${\small \clubsuit}$] $ i$ is replaced by $1$, and $1$ is replaced by $-i$, otherwise. This case happens when $1$ is at the beginning of a sequence of type $(1,-(i-1),\ldots, -2)$, and $i$ is the initial value of the sequence $(i,i+1,\ldots, m_2)$.
\end{itemize}
\item[ii)] Moreover
$$ \sum_{\be \in B(M)} q^{\ell_D(\be)} = [m_2]_q \cdot  \sum_{\g \in D(M)} q^{\ell_D(\g)}.$$
\end{itemize}
\end{prop}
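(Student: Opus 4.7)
My plan has three stages. First, by Lemma \ref{Dcombclass} case 3), each $\g \in D(M)$ is a shuffle of specific sequences; relabeling every $-1$ as $+1$ produces a shuffle fitting the $B$-template of Lemma \ref{Bcombclass}, so $D_1(M) \subseteq B(M)$. The map $\psi\colon D(M) \to D_1(M)$ given by this relabeling is clearly a bijection, and it preserves $\ell_D = \inv + \me_2$: writing $p$ for the position of $-1$ in $\g$, the conditions $\g(j)+(-1)<0$ and $\g(j)+1<0$ both reduce to $\g(j) \leq -2$ for every $j \neq p$ (since $\pm 1$ is absent elsewhere), and the inversion indicators at position $p$ match up identically.

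Second, I define the recursive maps $\varphi_i\colon D_{1\ldots i-1}(M) \to D_{1\ldots i}(M)$ by the two rules of the statement, and verify that $\varphi_i(\be)$ is a valid $B$-shuffle, that $\varphi_i$ is invertible (by applying the symmetric rule), and crucially that $\ell_D(\varphi_i(\be)) = \ell_D(\be) + 1$. The length computation splits into three subcases. In the ``same sequence with $+i$'' swap (values $1$ at $p$ and $i$ at $q$), $\me_2$ is invariant by symmetry of the two positions and $\inv$ increases by exactly one, because the iteratively built sequences of $D_{1\ldots i-1}(M)$ guarantee that no value in $\{2,\ldots,i-1\}$ lies between $p$ and $q$ in the window -- the potentially troublesome values have been pushed outside that segment by the previous swaps, leaving only entries from other sequences with absolute value $> m_2$. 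The ``same sequence with $-i$'' subcase is handled analogously, the exclusion set now being $\{-(i-1),\ldots,-1\}$. The ``otherwise'' subcase is subtler: the replacement $1\mapsto -i$, $i\mapsto 1$ shifts both $\inv$ and $\me_2$, but a pair-by-pair accounting shows that the pair $\{p,q\}$ contributes $+1$ to $\me_2$ while all remaining contributions cancel, thanks to the inductive absence from the image of the values $\{2,\ldots,i-1\}$ (they have been absorbed into $1$'s at previous ``otherwise'' steps).

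Third, composing $\psi$ with $\varphi_2\circ\cdots\circ\varphi_i$ yields a length-shifted bijection $D(M)\to D_{1\ldots i}(M)$, so $|D_{1\ldots i}(M)| = |D(M)|$. By Remark \ref{rem33}, $\sum_{i=1}^{m_2}|D_{1\ldots i}(M)| = m_2|D(M)| = |B(M)|$. The $D_{1\ldots i}(M)$ are pairwise disjoint, as each is characterized in the shuffle description by a distinct value-set together with a specific relative order of $1$ and $\pm i$; combined with $D_{1\ldots i}(M)\subseteq B(M)$ and the cardinality equality, this forces $B(M) = \biguplus_{i=1}^{m_2}D_{1\ldots i}(M)$, proving (i). Part (ii) is then immediate:
$$
\sum_{\be\in B(M)}q^{\ell_D(\be)} = \sum_{i=1}^{m_2}q^{i-1}\sum_{\g\in D(M)}q^{\ell_D(\g)} = [m_2]_q\sum_{\g\in D(M)}q^{\ell_D(\g)}.
$$
The main obstacle is the pair-by-pair length accounting in step two, especially for the ``otherwise'' subcase, where the required cancellations rest on a precise inductive description of the sequences defining $D_{1\ldots i-1}(M)$.
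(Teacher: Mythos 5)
Your proof follows essentially the same route as the paper: identify $D_1(M)$ with $D(M)$ via the $\ell_D$-preserving relabeling $-1\mapsto 1$, show that each recursive step $D_{1\ldots i-1}(M)\to D_{1\ldots i}(M)$ raises $\ell_D=\inv+\me_2$ by exactly one (in the ``otherwise'' case the paper makes your cancellation explicit: $\inv$ drops by $i-2$ while $\me_2$ gains $i-1$), and deduce the disjointness of the decomposition together with the cardinality count of Remark \ref{rem33}. Two of your supporting claims are misstated --- entries of other sequences lying between the two swapped positions need not have absolute value greater than $m_2$ (the all-negative sequence ending in $-2$ can be interleaved there), and in the ``otherwise'' configuration the values $2,\ldots,i-1$ occur only negated because they were already negative in the original sequence $(-r_1,\ldots,-2,1)$ of Lemma \ref{Dcombclass}, not because of earlier ``otherwise'' steps (there is at most one such step per block) --- but in both cases the property actually needed, namely that no entry with value strictly between the two swapped values separates them in the window, does hold, so the argument is sound.
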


\noindent Before writing down the proof let us consider an example.

\begin{example}
Consider $n=4$ and $M=\{1,3\}$. Then $D(M)$ is given by the shuffles of the following blocks of increasing sequences (written in column).
$$D(M)=\left\{
\begin{array}{rrrr}
(1) & (-1,2,3) & (-2,1) & (-3,-2,1) \\
(2,3); & (-4); & (3); & (4) \\
(4) &  & (-4) & \\
\end{array}\right\}
$$
Then $B(M)$ splits as disjoint union of the following three subsets:  
$$D_1(M)=\left\{
\begin{array}{rrrr}
(1) & (1,2,3) & (-2,1) & (-3,-2,1) \\
(2,3); & (-4); & (3); & (4) \\
(4) &  & (-4) & \\
\end{array}\right\}
$$
$$D_{12}(M)=\left\{
\begin{array}{rrrr}
(-2) & (2,1,3) & (1,-2) & (-3,1,-2) \\
(1,3); & (-4); & (3); & (4) \\
(4) &  & (-4) & \\
\end{array}\right\}
$$
$$D_{123}(M)=\left\{
\begin{array}{rrrr}
(-2) & (2,3,1) & (-3,-2) & (1,-3,-2) \\
(3,1); & (-4); & (1); & (4) \\
(4) &  & (-4) & \\
\end{array}\right\}.
$$
\end{example}
\medskip

\begin{proof}
The transformations defining $D_ {1\ldots i}(M)$ involve only the first two sequences of the three blocks of (\ref{shuffleD-1}). It is easy to see that $D_{1\ldots i}(M) \subseteq B(M)$ for all $i \in [m_2]$, and that $D_{1\ldots i}(M)$ and $D_{1\ldots j}(M)$ are disjoint if $i\neq j$. Hence the decomposition in $i)$ follows from Remark \ref{rem33}. 

Since changing $-1$ into $1$ in a signed permutation $\g$ affects neither $\inv(\g)$ nor $\me_2(\g)$, it follows that  
$$\sum_{\g \in D(M)} q^{\ell_D(\g)} = \sum_{\g \in D_{1}(M)} q^{\ell_D(\g)}.$$
Now let us show that for all $i\geq 2$ 
$$\sum_{\g \in D_{1\ldots i}(M)} q^{\ell_D(\g)} = q \sum_{\g \in D_{1\ldots i-1}(M)} q^{\ell_D(\g)}.$$ 
Let $\g \in D_{1\ldots i-1}(M)$. Consider the block in (\ref{shuffleD-1}) whose a particular shuffle gives $\g$. 

If $1$ and $\pm i$ are in the same sequence, it can be either of the form $(\ldots, 1,i,\ldots,m_2)$, or of the form $(-r_1,\ldots, -i,1\ldots,-2)$. Now consider the shuffle giving $\g$, where $1$ has been switched with $\pm i$. We get a new element $\bar{\g}\in D_{1\ldots i}(M)$. It is clear that  $\bar{\g}$ has one more inversion with respect to $\g$, and so the $D$-length go up by $1$. In fact, all other sequences in the block (whose shuffle gives $\g$) are made by elements that are either all bigger or all smaller of both $1$ and $\pm i$. Hence the difference between $\inv(\g)$ and $\inv(\bar{\g})$ depends only on the relative positions of $1$ and $\pm i$ within the same sequence. 

Suppose that $1$ and $i$ are not in the same sequence. This means that $1$ is at the beginning of the sequence $(1,-(i-1),\ldots,-2)$ and $i$ is at the beginning of the sequence $(i,i+1,\ldots,m_{2})$. So $\bar{\g} \in D_{1\ldots i}(M)$, the element corresponding to $\g$ after the switch, is obtained by shuffling a block that contains the following two sequences
$$(-i,-(i-1),\ldots,-2) \;\; \text{and}\;\; (1,i+1,\ldots,m_{2}).$$
Once again all other sequences of the block are made by elements that are either all smaller or bigger of both $1$ and $i$. 
The difference between the values of $\inv(\bar{\g})$ and $\inv(\g)$ depends only on the relative positions of $1$ and $i$. Hence
$\bar{\g}$ loses $i-2$ inversions with respect to $\g$ (the ones given by the $1$ at the beginning of the sequence), and $\me_2(\bar{\g})= \me_2(\g) +(i-1)$ thanks to $-i$. So $\ell_D(\bar{\g})=\ell_D(\g)+1$. 
\end{proof}

\section{Equidistribution over descent classes}

In this section we show generalizations of Theorem \ref{FS1} to Coxeter groups of type $B$ and $D$. We need the following classical result; see \cite[Theorem 3.1]{GG}, and \cite[Example 2.2.5]{StaEC1} for a proof.

\begin{thm}\label{stanley} 
Let $n \in \PP$ and $M=\{m_1,m_2,\ldots, m_t\}_{<}\subseteq [n-1]$. Then
\begin{eqnarray*}\sum_{\{\s \in S_n \mid \Des(\s^{-1})\subseteq M\}}q^{\maj(\s)} \ = \sum_{\{\s \in S_n \mid \Des(\s^{-1})\subseteq M\}}q^{\inv(\s)} = \begin{bmatrix} n \\ m_1, \; m_2-m_1, \ldots, n-m_t \end{bmatrix}_q.
\end{eqnarray*}
\end{thm}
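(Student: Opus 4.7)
The plan is to reduce the identity to a classical $q$-shuffle enumeration. First I would unfold the condition $\Des(\s^{-1})\subseteq M$: it is equivalent to $\s^{-1}(i)<\s^{-1}(i+1)$ for every $i\in[n-1]\setminus M$, which means that in the window $[\s(1),\ldots,\s(n)]$ the value $i$ appears strictly to the left of the value $i+1$ whenever $i\notin M$. Setting $m_0:=0$ and $m_{t+1}:=n$, this says exactly that $\s$ is a shuffle of the $t+1$ increasing sequences
\[
B_j:=(m_j+1,m_j+2,\ldots,m_{j+1}), \qquad j=0,1,\ldots,t,
\]
of respective lengths $a_j:=m_{j+1}-m_j$, so the set in the statement is in bijection with such shuffles.

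For the $\inv$ side, I would then establish the classical shuffle identity
\[
\sum_{\s} q^{\inv(\s)} \;=\; \begin{bmatrix} n \\ a_0,\,a_1,\,\ldots,\,a_t \end{bmatrix}_q
\]
by induction: either on $n$, by removing the largest entry and tracking how many block-elements lie to its right in order to match the $q$-Pascal recursion for $q$-multinomials; or on $t$, by repeatedly merging two adjacent blocks using the two-block base case $\sum_\tau q^{\inv(\tau)} = \begin{bmatrix} a+b \\ a \end{bmatrix}_q$ (obtained directly from lattice-path/area considerations) and iterating. Either route produces the $q$-multinomial on the right-hand side of the theorem.

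For the $\maj$ side, rather than repeating a direct combinatorial argument, I would invoke Foata's second fundamental transformation: this is a bijection $\phi:S_n\to S_n$ satisfying $\inv(\phi(\s))=\maj(\s)$ and $\Des(\s^{-1})=\Des(\phi(\s)^{-1})$, whose existence is precisely the content of Theorem \ref{FS1}. It forces the $\maj$ and $\inv$ generating functions to agree on each set $\{\s\mid\Des(\s^{-1})=M'\}$, and summing over subsets $M'\subseteq M$ transports the $\inv$ identity to the $\maj$ identity, concluding the proof.

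The main obstacle is the bookkeeping in the $q$-shuffle induction: one must verify that inserting the largest entry in each eligible position contributes inversions of exactly the right shape to reproduce the $q$-Pascal rule for multinomials. Once this routine step is cleared, the remaining work is purely the shuffle characterisation of the descent class together with a clean appeal to Theorem \ref{FS1}.
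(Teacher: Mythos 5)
Your proposal is correct, but note that the paper itself contains no proof of Theorem \ref{stanley}: it is imported as a classical result with pointers to \cite[Theorem 3.1]{GG} and \cite[Example 2.2.5]{StaEC1}, so there is no internal argument to compare against. What you wrote is essentially the standard proof from those sources. The identification of $\{\s \in S_n \mid \Des(\s^{-1})\subseteq M\}$ with shuffles of the consecutive increasing blocks $(m_j+1,\ldots,m_{j+1})$ is exactly right, and it is the same mechanism the paper exploits later for the type $B$ and $D$ analogues (Lemma \ref{Bcombclass} and equation (\ref{crucialpoint})). The $q$-multinomial evaluation of $\sum q^{\inv}$ over such shuffles via the two-block Gaussian binomial, iterated over adjacent blocks, is sound; the key point you should make explicit is that every value in a later block exceeds every value in an earlier block, so cross-block inversions are determined purely by which of the two elements comes first, which is what makes the product of Gaussian binomials (equivalently the $q$-Pascal recursion) come out cleanly. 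For the $\maj$ side, your appeal to Theorem \ref{FS1} is legitimate and non-circular within the paper's logic, since \ref{FS1} is quoted from Foata--Sch\"utzenberger rather than derived from \ref{stanley}, and summing the exact-descent-class identity over all $M'\subseteq M$ does yield the first equality. One small caveat: Theorem \ref{FS1} as stated asserts an equidistribution, not the existence of a bijection preserving $\Des(\s^{-1})$ --- the latter is a strictly stronger property of Foata's second fundamental transformation --- but since you only use the equidistribution, nothing is lost. A self-contained alternative for the $\maj$ side, avoiding \ref{FS1} entirely, is MacMahon's equidistribution on multiset rearrangements transported through standardization, which is the route taken in \cite{StaEC1}.
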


\begin{thm}\label{mainB}
Let $n \in \PP$ and $M=\{m_1,m_2,\ldots,m_t\}_<\subseteq [0,n-1]$. Then
\begin{eqnarray}\label{th2}
\sum_{\{\be \in B_n \mid \Des_B(\be^{-1})\subseteq M\}} q^{\nmaj(\be)} &=& \sum_{\{\be \in B_n \mid \Des_B(\be^{-1})\subseteq M\}} q^{\ell_B(\be)} = \sum_{\{\be \in B_n \mid \Des_B(\be^{-1})\subseteq M\}}q^{\fmaj(\be)} \nonumber
\\
&=& \begin{bmatrix} n \\ m_1, \; m_2-m_1, \ldots, n-m_t \end{bmatrix}_q \cdot \displaystyle{\prod_{i=m_1+1}^{n} (1+q^i)}. 
\end{eqnarray}
\end{thm}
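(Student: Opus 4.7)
The plan is to reduce the equidistribution $\sum q^{\nmaj}=\sum q^{\ell_B}$ over $B(M)$ to Theorem \ref{stanley} on $S_n$ by value-standardization, and to derive the closed form from the Poincar\'e-series identity for parabolic quotients.

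First, I would use Lemma \ref{Bcombclass} to index each $\be\in B(M)$ by a tuple $(r_1,\ldots,r_t)$ with $m_i\le r_i\le m_{i+1}$ together with a shuffle of the $2t+1$ increasing sequences in \eqref{shuffleB}. For a fixed tuple the multiset of negative values of $\be$ is fixed, so by \eqref{som} the quantity $\me_1(\be)+\me_2(\be)=-\sum_{\be(i)<0}\be(i)$ is constant on those shuffles. Combined with \eqref{nmaj} and $\ell_B(\be)=\inv(\be)+\me_1(\be)+\me_2(\be)$, the desired identity reduces, tuple by tuple, to showing $\sum_{\text{shuffles}}q^{\maj(\be)}=\sum_{\text{shuffles}}q^{\inv(\be)}$.

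For this last equality I would standardize each shuffle $\be$ by replacing its entries with their ranks to obtain a permutation $\sigma\in S_n$; since $\inv$ and $\maj$ depend only on the relative order of entries, both statistics are preserved under standardization. Because $m_i\le r_i\le m_{i+1}$, the blocks of \eqref{shuffleB} are totally ordered by their value-intervals (every positive value exceeds every negative value; among the negative blocks the later ones lie strictly below the earlier ones; among the positive blocks the later ones lie strictly above the earlier ones). After standardization each block becomes a block of consecutive ranks, so $\sigma$ is a shuffle of $2t+1$ increasing runs of consecutive integers with prescribed lengths $L_1,\ldots,L_{2t+1}$, and $\Des(\sigma^{-1})$ is contained in the corresponding set of partial sums. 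Theorem \ref{stanley} then yields $\sum q^{\maj(\sigma)}=\sum q^{\inv(\sigma)}$, hence $\sum_{\be\in B(M)}q^{\nmaj(\be)}=\sum_{\be\in B(M)}q^{\ell_B(\be)}$.

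For the explicit formula I would observe that $B(M)$ is the left parabolic quotient ${}^J\! B_n$ with $J=\{s_i^B:i\in[0,n-1]\setminus M\}$, so by the left-quotient version of Proposition \ref{2.1} one has $\sum_{\be\in B(M)}q^{\ell_B(\be)}=W(B_n,q)/W_J(q)$. Using $[2i]_q=[i]_q(1+q^i)$ gives $W(B_n,q)=[n]_q!\prod_{i=1}^n(1+q^i)$; reading $W_J$ off the Dynkin diagram of Figure \ref{graphB} one finds $W_J=\prod_{i\ge 1}S_{m_{i+1}-m_i}$ if $0\in M$, and $W_J=B_{m_1}\times\prod_{i\ge 1}S_{m_{i+1}-m_i}$ if $0\notin M$; in the latter case the factor $W(B_{m_1},q)=[m_1]_q!\prod_{i=1}^{m_1}(1+q^i)$ cancels the low-index part of $\prod_{i=1}^n(1+q^i)$. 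Both cases collapse to the claimed product. The equality with $\fmaj$ is due to Adin--Brenti--Roichman \cite{ABR2}. The main technical point is the standardization bookkeeping: verifying that the interleaved positive/negative blocks of \eqref{shuffleB} genuinely sort into disjoint consecutive intervals of values, so that Theorem \ref{stanley} applies to one single descent class and the factors $q^{\me_1+\me_2}$ and the multinomial coefficients reassemble into the announced formula.
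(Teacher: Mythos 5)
Your proof is correct. For the first equality it follows the paper's own route: fix the tuple $(r_1,\ldots,r_t)$ from Lemma \ref{Bcombclass}, observe via \eqref{som} that $\me_1+\me_2$ is constant on each shuffle class, and reduce $\maj$ versus $\inv$ on that class to Theorem \ref{stanley} because both statistics depend only on the relative order of the shuffled entries; you spell out the standardization bookkeeping (that the blocks occupy disjoint consecutive value-intervals, so the standardized class is exactly a descent class of $S_n$) which the paper compresses into one sentence. Where you genuinely diverge is the closed form: the paper simply cites \cite{ABR2} for the product formula (and for the $\fmaj$ equality), whereas you derive $\sum_{\be\in B(M)}q^{\ell_B(\be)}=W(B_n,q)/W_J(q)$ from the left-quotient version of Proposition \ref{2.1} and read $W_J$ off the Dynkin diagram, with the factor $[m_1]_q!\prod_{i=1}^{m_1}(1+q^i)$ cancelling exactly when $0\notin M$. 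This is a clean, self-contained alternative; note it is specific to $\ell_B$ being the actual length function of $B_n$ --- the analogous computation for $\ell_D$ over $B(M)$ in Lemma \ref{calcoloserie} cannot be obtained this way and requires the explicit summation over the $r_i$ via the $q$-binomial theorem. The $\fmaj$ equality you correctly leave to \cite{ABR2}, as does the paper.
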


\begin{proof}
Let us denote by ${\rm Sh}(r_1,\ldots,r_t)$ the set of signed permutations obtained as shuffles of the sequences in (\ref{shuffleB}), with prescribed $r_1,\ldots,r_t$. From Theorem \ref{stanley} it follows that
\begin{equation}\label{crucialpoint}
\sum_{\beta \in {\rm Sh}(r_1,\ldots,r_t)}q^{\maj(\beta)}=\sum_{\beta \in {\rm Sh}(r_1,\ldots,r_t)}q^{\inv(\beta)}= \begin{bmatrix} n \\ m_1, \; r_1-m_1, \ldots, r_t-m_t,  \; n-r_t \end{bmatrix}_q. 
\end{equation}
In fact inversion number and major index of a shuffle depend only on the order of the elements in the shuffled sequences. From this, and the definitions of $\nmaj(\beta)=\maj(\beta)+\me_1(\beta)+\me_2(\beta)$ and of $\ell_B(\beta)=\inv(\beta)+\me_1(\beta)+\me_2(\beta)$, the first equality in (\ref{th2}) follows. The second equality and the sum have been computed in \cite{ABR2}. The symbol $\fmaj$ denote the {\em flag-major index} introduced by Adin and Roichman  in \cite{AR}.
\end{proof}
 
By the Principle of Inclusion-Exclusion we obtain
\begin{cor}\label{c1}
$$
\sum_{\{\beta \in B_n \mid \Des_B(\beta^{-1})=M\}}q^{\nmaj(\beta)}=\sum_{\{\beta \in B_n \mid \Des_B(\beta^{-1})=M\}}q^{\ell_B(\beta)}=\sum_{\{\beta \in B_n \mid \Des_B(\beta^{-1})=M\}}q^{\fmaj_B(\beta)}.$$
\end{cor}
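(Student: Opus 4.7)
The plan is straightforward inclusion--exclusion on the descent set, converting the ``$\subseteq M$'' identity of Theorem \ref{mainB} into an ``$= M$'' identity. Concretely, for any statistic $\operatorname{stat}$ on $B_n$, set
$$f_{\operatorname{stat}}(M):=\sum_{\{\beta\in B_n\mid \Des_B(\beta^{-1})\subseteq M\}}q^{\operatorname{stat}(\beta)},\qquad g_{\operatorname{stat}}(M):=\sum_{\{\beta\in B_n\mid \Des_B(\beta^{-1})= M\}}q^{\operatorname{stat}(\beta)}.$$
Partitioning $\{\beta\in B_n\mid \Des_B(\beta^{-1})\subseteq M\}$ by the actual descent set gives $f_{\operatorname{stat}}(M)=\sum_{M'\subseteq M} g_{\operatorname{stat}}(M')$.

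Next I would invert this relation over the Boolean lattice $2^{[0,n-1]}$. Möbius inversion on $2^{[0,n-1]}$ yields
$$g_{\operatorname{stat}}(M)=\sum_{M'\subseteq M}(-1)^{|M\setminus M'|}\,f_{\operatorname{stat}}(M').$$
Now apply this for each of $\operatorname{stat}\in\{\nmaj,\ell_B,\fmaj\}$. By Theorem \ref{mainB}, for every $M'\subseteq[0,n-1]$ the three values $f_{\nmaj}(M')$, $f_{\ell_B}(M')$ and $f_{\fmaj}(M')$ agree (they all equal the explicit product on the right-hand side of (\ref{th2})). Substituting into the inversion formula term-by-term, all three alternating sums coincide, so $g_{\nmaj}(M)=g_{\ell_B}(M)=g_{\fmaj}(M)$, which is precisely the claim.

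There is no real obstacle: once Theorem \ref{mainB} is in hand, the corollary is just a formal consequence of the fact that two functions on $2^{[0,n-1]}$ whose summatory functions over order ideals agree must themselves agree. The only thing to be careful about is that the index set is $[0,n-1]$ (including $0$), not $[n-1]$, but this does not affect the inclusion--exclusion step since the argument only uses the Boolean lattice structure of subsets of the descent index set.
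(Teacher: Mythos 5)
Your argument is correct and is exactly the paper's: the corollary is derived from Theorem \ref{mainB} by inclusion--exclusion (M\"obius inversion on the Boolean lattice of subsets of $[0,n-1]$), since the three ``$\subseteq M'$'' generating functions agree for every $M'$. The paper states this in one line; your write-up simply makes the standard inversion step explicit.
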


The following lemma will be useful in the computation of our main result Theorem \ref{mainD}.

\begin{lem}\label{calcoloserie}
Let $n \in \PP$ and $M=\{m_1,m_2,\ldots,m_t\}_<\subseteq [0,n-1]$. Then
\begin{eqnarray*}
\sum_{\{\be \in B_n \mid \Des_B(\be^{-1})\subseteq M\}} q^{\ell_D(\be)} = \begin{bmatrix} n \\ m_1, \; m_2-m_1, \ldots, n-m_t \end{bmatrix}_q \cdot \displaystyle{\prod_{i=m_1}^{n-1} (1+q^i)}. 
\end{eqnarray*}
\end{lem}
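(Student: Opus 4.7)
The plan is to mirror the proof of Theorem \ref{mainB}, replacing $\ell_B = \inv + \me_1 + \me_2$ by $\ell_D = \inv + \me_2$. The dropped $q^{\me_1}$ factor will shift one exponent from $\binom{r_i+1}{2}$ down to $\binom{r_i}{2}$, which is exactly what shifts the final product from $\prod_{i=m_1+1}^{n}(1+q^i)$ down to $\prod_{i=m_1}^{n-1}(1+q^i)$. Otherwise the structure of the argument is identical.

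By Lemma \ref{Bcombclass}, $B(M)$ is the disjoint union, over tuples $(r_1,\dots,r_t)$ with $m_i \le r_i \le m_{i+1}$, of the shuffle sets ${\rm Sh}(r_1,\dots,r_t)$. Every $\be$ in a given shuffle set has the same multiset of negative entries $\bigsqcup_i\{m_i+1,\dots,r_i\}$, so $\me_2$ is constant on ${\rm Sh}(r_1,\dots,r_t)$; a short calculation from (\ref{som}) together with $\me_1(\be) = \sum_i(r_i - m_i)$ yields
$$\me_2(\be) \;=\; \sum_{i=1}^t\left[\binom{r_i}{2} - \binom{m_i}{2}\right].$$
Combining this with the shuffle identity (\ref{crucialpoint}) and splitting the multinomial coefficient as $\binom{n}{m_1, m_2-m_1, \dots, n-m_t}_q \prod_i \binom{m_{i+1}-m_i}{r_i - m_i}_q$, the plan is to reach
$$\sum_{\be \in B(M)} q^{\ell_D(\be)} \;=\; \binom{n}{m_1, m_2-m_1, \dots, n-m_t}_q \prod_{i=1}^t \left[ q^{-\binom{m_i}{2}} \sum_{r_i=m_i}^{m_{i+1}} q^{\binom{r_i}{2}} \binom{m_{i+1}-m_i}{r_i-m_i}_q \right].$$

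The remaining step is to evaluate each bracket. I would substitute $k = r_i - m_i$ and use $\binom{m_i+k}{2} = \binom{m_i}{2} + m_i k + \binom{k}{2}$; the $q^{-\binom{m_i}{2}}$ cancels and what remains is $\sum_{k=0}^{m_{i+1}-m_i} q^{m_i k + \binom{k}{2}}\binom{m_{i+1}-m_i}{k}_q$, which is exactly the right-hand side of Theorem \ref{qbinomial} with $x = q^{m_i - 1}$ (so that $q^{\binom{k+1}{2}}x^k = q^{m_i k + \binom{k}{2}}$); hence it equals $(-q^{m_i};q)_{m_{i+1}-m_i} = \prod_{j=m_i}^{m_{i+1}-1}(1+q^j)$. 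Concatenating over the partition $[m_1,n-1] = \bigsqcup_i [m_i, m_{i+1}-1]$ (recall $m_{t+1} = n$) gives $\prod_{j=m_1}^{n-1}(1+q^j)$, completing the proof. The only step with any friction is the $q$-binomial substitution; everything else is bookkeeping parallel to the $\ell_B$ case, and the shift that produces the correct product range falls out automatically.
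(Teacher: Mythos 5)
Your proof is correct and is essentially the paper's own argument: both decompose $B(M)$ into the shuffle sets ${\rm Sh}(r_1,\dots,r_t)$, isolate the contribution of the negative entries as a function of the $r_i$ (your $\me_2=\sum_i\bigl[\binom{r_i}{2}-\binom{m_i}{2}\bigr]$ is algebraically the same exponent $\binom{r_i-m_i+1}{2}+(r_i-m_i)(m_i-1)$ the paper obtains via $\ell_D=\ell_B-\me_1$), split the multinomial, and apply Theorem \ref{qbinomial} with $x=q^{m_i-1}$ to telescope the product to $\prod_{j=m_1}^{n-1}(1+q^j)$. No gaps.
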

\begin{proof}
Let $\be \in B(M)$. Recall that $\ell_D(\be)=\ell_B(\be)-\me_1(\be)$, and that $\ell_B(\beta)=\inv(\beta)+\sum_{\be(i)<0} |\be(i)|$.  Note that $\be(i)<0$ if and only if there exists a $j$ such that $m_j+1 
\leq |\be(i)| \leq r_j$. Therefore
\begin{eqnarray*}
\sum_{\be(i)<0} |\be(i)| & = & \sum_{i=1}^t (m_i+1) + \ldots + r_i\\
			 & = & \sum_{i=1}^t \left[(r_i - m_i)m_i + \frac{(r_i-m_i)(r_i-m_i+1)}{2}\right]\\
			 & = & \sum_{i=1}^t \frac{1}{2} (r_i-m_i)(r_i+m_i+1).
\end{eqnarray*}
Moreover $\me_1(\be) = \sum_{i=1}^t (r_i-m_i)$, and so 
\begin{eqnarray*}
\ell_D(\be)& = & \inv(\be)+ \sum_{i=1}^t \frac{1}{2}(r_i-m_i)(r_i+m_i+1) - (r_i-m_i)\\
		& = & \inv(\be) + \sum_{i=1}^t {r_i-m_i+1 \choose 2} + (r_i-m_i)(m_i-1)
\end{eqnarray*}
Hence by (\ref{crucialpoint}) 
\begin{eqnarray}
\sum_{\be \in B(M)} q^{\ell_{D}(\be)} & = & \sum_{r_1,\ldots,r_t} \ \sum_{\beta \in {\rm Sh}(r_1,\ldots, r_t)} q^{\inv(\be)} q^{\sum_{i=1}^t {r_i-m_i+1 \choose 2} + (r_i-m_i)} \nonumber \\
				     & = & \sum_{r_1,\ldots, r_t} \begin{bmatrix} n \\ m_1, \; r_1-m_1, \ldots, n-r_t \end{bmatrix}_q \cdot q^{\sum_{i=1}^t {r_i-m_i+1 \choose 2} + (r_i-m_i)(m_i-1) } \nonumber \\
				     & = & \begin{bmatrix} n \\ m_1, \; m_2-m_1, \ldots, n-m_t \end{bmatrix}_q \cdot \prod_{i=1}^t \sum_{r_i=m_i}^{m_{i+1}} \begin{bmatrix} m_{i+1}-m_i \\ r_i - m_i \end{bmatrix}_q \cdot q^{{r_i-m_i+1 \choose 2} + (r_i-m_i)(m_i-1)} \nonumber \\
				     				     & = & \begin{bmatrix} n \\ m_1, \; m_2-m_1, \ldots, n-m_t \end{bmatrix}_q \cdot \prod_{i=1}^t \prod_{j=m_i}^{m_{i+1}-1}(1+ q^j) \label{passo} \\
				     & = & \begin{bmatrix} n \\ m_1, \; m_2-m_1, \ldots, n-m_t \end{bmatrix}_q \cdot \prod_{j=m_1}^{n-1} (1+q^j) \nonumber
\end{eqnarray}
where the sum runs over $m_i\leq r_i \leq m_{i+1}$, and (\ref{passo}) is obtained by applying the $q$-binomial Theorem \ref{qbinomial} with  $x=q^{(m_i-1)}$.
\end{proof}

\begin{thm}\label{mainD}
Let $n \in \PP$ and $M=\{m_1,m_2,\ldots,m_t\}_<\subseteq [0,n-1]$. Then
\begin{eqnarray*}
\sum_{\g \in D(M)} q^{\dmaj(\g)}&=& \sum_{\g \in D(M)} q^{\ell_D(\g)}\\
&=& {\displaystyle \left\{\begin{array}{ll}
	\begin{bmatrix} n \\ m_1, \; m_2-m_1, \ldots, n-m_t \end{bmatrix}_q \cdot \displaystyle{\prod_{i=1}^{n-1} (1+q^i)} \; & {\rm if} \; 0 \in M;\\
        \begin{bmatrix} n \\ m_1, \; m_2-m_1, \ldots, n-m_t \end{bmatrix}_q \cdot \displaystyle{\prod_{i=m_1}^{n-1} (1+q^i)} \; & {\rm if} \; 0, 1 \not\in M;\\
	\begin{bmatrix} n \\ m_1, \; m_2-m_1, \ldots, n-m_t \end{bmatrix}_q \cdot \displaystyle{\frac{\prod_{i=1}^{n-1} (1+q^i)}{[m_2]_q}} \; & {\rm if} \; 0 \not\in M, \; {\rm and} \; 1 \in M.
	\end{array} 
	\right .}
\end{eqnarray*}
\end{thm}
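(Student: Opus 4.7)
The plan is to prove the two equalities separately, using the shuffle description of $D(M)$ from Lemma \ref{Dcombclass} for the first, and the bijective relations between $B(M)$ and $D(M)$ from Propositions \ref{L1}, \ref{L2}, \ref{L3} combined with Lemma \ref{calcoloserie} for the second.

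For the first equality $\sum_{\g \in D(M)} q^{\dmaj(\g)} = \sum_{\g \in D(M)} q^{\ell_D(\g)}$, I would mimic the argument of Theorem \ref{mainB}. From (\ref{dmaj}) one has $\dmaj(\g)=\maj(\g)+\me_2(\g)$ and $\ell_D(\g)=\inv(\g)+\me_2(\g)$. By Lemma \ref{Dcombclass}, $D(M)$ is partitioned into shuffle classes indexed by admissible tuples $(r_1,\ldots,r_t)$ (within a given block, when several blocks occur in case 3); on each shuffle class, both $\me_1(\g)$ and $\me_2(\g)$ are constant, being determined solely by the negative values $-r_i,\ldots,-(m_i+1)$ appearing. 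Hence it suffices to show equidistribution of $\maj$ and $\inv$ on each shuffle class, which is exactly Theorem \ref{stanley} applied to the underlying (unsigned) shuffles, as in (\ref{crucialpoint}).

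For the second equality, I would split into the three cases of $M$ and in each case invoke the corresponding proposition to express $\sum_{\g \in D(M)} q^{\ell_D(\g)}$ in terms of $\sum_{\be \in B(M)} q^{\ell_D(\be)}$, which is given by Lemma \ref{calcoloserie} as
\[
\begin{bmatrix} n \\ m_1,\, m_2-m_1,\,\ldots,\, n-m_t\end{bmatrix}_q \cdot \prod_{i=m_1}^{n-1}(1+q^i).
\]
If $0 \in M$ (so $m_1=0$), Proposition \ref{L1}(ii) yields a factor of $1/2$; but $\prod_{i=0}^{n-1}(1+q^i)=2\prod_{i=1}^{n-1}(1+q^i)$, so the $2$'s cancel and give the first formula. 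If $0,1\notin M$, Proposition \ref{L2}(ii) gives equality with $\sum_{B(M)}q^{\ell_D}$ directly, producing the second formula. If $0\notin M$ and $1\in M$ (so $m_1=1$), Proposition \ref{L3}(ii) contributes a factor of $1/[m_2]_q$, giving the third formula after writing $\prod_{i=1}^{n-1}(1+q^i)=\prod_{i=m_1}^{n-1}(1+q^i)$.

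The main (mild) obstacle is the bookkeeping for the first equality in case 3, where $D(M)$ is a union of three subsets of shuffles arising from three distinct blocks in (\ref{shuffleD-1}). One must confirm that, within each block and each choice of $(r_1,\ldots,r_t)$ compatible with the parity condition, the associated shuffles form a set on which Theorem \ref{stanley} applies verbatim (since the ``template'' of increasing runs behaves exactly as in the type-$A$ setting once $\me_2$ is factored out). Once this is verified, both equalities follow in a unified manner from the structural results of the previous section.
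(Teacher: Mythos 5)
Your proposal is correct and follows essentially the same route as the paper: the first equality comes from the shuffle decomposition of $D(M)$ (on each shuffle class $\me_2$ is constant and $\maj$, $\inv$ are equidistributed by Theorem \ref{stanley}, as in (\ref{crucialpoint})), and the second from combining Lemma \ref{calcoloserie} with the factors $2$, $1$, $[m_2]_q$ from Propositions \ref{L1}, \ref{L2}, \ref{L3}. Your case-by-case bookkeeping, including the cancellation $1+q^0=2$ when $m_1=0$ and the verification that the type-$D$ blocks are still shuffles of increasing sequences, correctly fills in the details the paper leaves implicit.
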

\begin{proof} Once again the first equality follows from (\ref{crucialpoint}) and the definitions of $\dmaj$ and $\ell_D$.
The computation of the sum is now an easy application of Lemma \ref{calcoloserie}, together with Propositions \ref{L1}, \ref{L2}, and \ref{L3}.
\end{proof}
As corollary we obtain the desired generalization. 
\begin{cor}\label{c2}
$$
\sum_{\{\g \in D_n \mid \Des_D(\g^{-1})=M\}}q^{\dmaj(\g)}=\sum_{\{\g \in D_n \mid \Des_D(\g^{-1})=M\}}q^{\ell_D(\g)}.
$$
\end{cor}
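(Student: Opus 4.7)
The plan is to deduce Corollary \ref{c2} from Theorem \ref{mainD} by the Principle of Inclusion-Exclusion, exactly in the same spirit as Corollary \ref{c1} was derived from Theorem \ref{mainB}. The key observation is that Theorem \ref{mainD} already establishes the equality
\[
\sum_{\{\g \in D_n \mid \Des_D(\g^{-1}) \subseteq M'\}} q^{\dmaj(\g)} \;=\; \sum_{\{\g \in D_n \mid \Des_D(\g^{-1}) \subseteq M'\}} q^{\ell_D(\g)}
\]
for \emph{every} subset $M' \subseteq [0,n-1]$, since the explicit closed form on the right-hand side of Theorem \ref{mainD} depends only on $M'$ and not on whether we are summing $\dmaj$ or $\ell_D$.

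My first step would be to set up the standard inclusion-exclusion identity: for any statistic $f$ on $D_n$ and any $M \subseteq [0,n-1]$,
\[
\sum_{\{\g \in D_n \mid \Des_D(\g^{-1}) = M\}} q^{f(\g)} \;=\; \sum_{M' \subseteq M} (-1)^{|M \setminus M'|} \sum_{\{\g \in D_n \mid \Des_D(\g^{-1}) \subseteq M'\}} q^{f(\g)}.
\]
This is immediate by M\"obius inversion on the Boolean lattice of subsets of $M$, exchanging the ``$=$'' and ``$\subseteq$'' conditions on $\Des_D(\g^{-1})$.

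Next I would apply this identity separately to $f = \dmaj$ and $f = \ell_D$. By Theorem \ref{mainD}, each term in the alternating sum on the right-hand side is the same in both cases (the coefficient and the sign $(-1)^{|M\setminus M'|}$ are identical, and the inner generating function over $\Des_D(\g^{-1}) \subseteq M'$ agrees for the two statistics). Summing the identical contributions over $M' \subseteq M$ yields the desired equality, finishing the proof. There is no real obstacle here: all the combinatorial work was done in Lemma \ref{Dcombclass}, Propositions \ref{L1}--\ref{L3}, and Theorem \ref{mainD}; this corollary is simply the ``strict descent set'' reformulation of that theorem.
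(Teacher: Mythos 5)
Your proof is correct and follows exactly the route the paper intends: Corollary \ref{c2} is obtained from Theorem \ref{mainD} by inclusion--exclusion over subsets $M'\subseteq M$, just as Corollary \ref{c1} is obtained from Theorem \ref{mainB}. The key point you correctly isolate is that the first equality in Theorem \ref{mainD} holds for every $M'$ regardless of which of the three cases it falls into, so the alternating sums for $\dmaj$ and $\ell_D$ agree term by term.
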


\begin{rem}
If we replace  $\Des_B$ with  the usual descent set $\Des$,  Corollary \ref{c1} is still valid. It easily follows from Theorem \ref{mainB} since $\Des(\be^{-1})\subseteq M$ if and only of  $\Des_B(\be^{-1})\subseteq M \cup \{0\}$. Analogously, by replacing $\Des_D$ with $\Des$, Corollary \ref{c2} holds for the Coxeter group of type $D$.

\noindent The two corollaries are not true if as descent set one choose $\NDes$ for $B_n$ and $\DDes$ for $D_n$.
\end{rem}

\section{Symmetry of the joint distribution}

In this section we find generalizations of Foata-Sch\"utzenberger Theorem \ref{FS2}, Roselle Theorem \ref{Roselle}, and Gessel Theorem \ref{Gessel}.
\smallskip

The following  is an easy computation.

\begin{lem}\label{lemmino} Let $n\in \PP$. Then
\[\sum_{u \in B^J}p^{\me_1(u)} q^{\me_1(u)+\me_2(u)}=\sum_{S\subseteq [n]}p^{|S|} q^{\sum_{i\in S}i}=\prod_{i=1}^n (1+pq^i)=(-pq;q)_n.\]
Moreover 
\[\sum_{u \in D^J}p^{\me_1(u)+\epsilon(u)} q^{\me_2(u)}=\sum_{S\subseteq [n-1]} p^{|S|} q^{\sum_{i\in S}i}=\prod_{i=1}^{n-1} (1+pq^i)=(-pq;q)_{n-1}.\]
\end{lem}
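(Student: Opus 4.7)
The plan is to parametrize both quotients $B^J$ and $D^J$ explicitly by subsets of $[n]$ and then read off the statistics. First observe that an element $u \in B^J$ is completely determined by its \emph{negative support} $S := \{|u(i)| \mid u(i) < 0\} \subseteq [n]$, since the increasing window condition $u(1) < \cdots < u(n)$ forces a unique arrangement of $\{-s \mid s \in S\} \cup ([n] \setminus S)$. Thus $u \mapsto S$ is a bijection $B^J \leftrightarrow 2^{[n]}$. Directly from the definition $\me_1(u) = |S|$, and from (\ref{som}) one obtains $\me_1(u) + \me_2(u) = -\sum_{u(i)<0} u(i) = \sum_{s \in S} s$. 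Substituting gives the middle equality, and the last equality is the standard expansion $\prod_{i=1}^n(1+pq^i) = \sum_{S\subseteq[n]} p^{|S|} q^{\sum_{i \in S} i}$, obtained by choosing in each factor either $1$ or $pq^i$.

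For the second identity, the same parametrization exhibits $D^J$ as the image of the \emph{even} subsets $S \subseteq [n]$, since $u \in D_n$ iff $\me_1(u) = |S|$ is even. Moreover $1 \in u([n])$ exactly when $1 \notin S$, so $\epsilon(u) = -[1 \in S]$ (using Iverson bracket), hence $\me_1(u) + \epsilon(u) = |S \setminus \{1\}|$; and (\ref{som}) gives $\me_2(u) = \sum_{s \in S}s - |S|$. To match the right-hand side, which is indexed by $T \subseteq [n-1]$, I would introduce the bijection
$$S \longmapsto T := \{s-1 \mid s \in S \setminus \{1\}\}$$
from even subsets of $[n]$ onto arbitrary subsets of $[n-1]$; its inverse sends $T$ to $\{t+1 \mid t \in T\}$, adjoining $1$ precisely when $|T|$ is odd so as to restore even parity.

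A short direct check then shows $|T| = |S \setminus \{1\}| = \me_1(u) + \epsilon(u)$ and
$$\sum_{t \in T}t \;=\; \sum_{s \in S \setminus \{1\}}(s-1) \;=\; \sum_{s \in S} s - |S| \;=\; \me_2(u),$$
where the possible contribution from $1 \in S$ cancels on both sides of the second equality. Reusing the product expansion for $\sum_{T \subseteq [n-1]} p^{|T|} q^{\sum_{i \in T} i}$ then finishes the lemma. There is no serious obstacle; the only delicate step is the parity bookkeeping in the bijection $S \leftrightarrow T$, together with the observation that $\epsilon$ is precisely the correction absorbing the potential element $1 \in S$ so that the summation reindexes cleanly over $[n-1]$ rather than $[n]$.
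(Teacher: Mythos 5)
Your proof is correct and follows exactly the route the paper intends (the paper only remarks that the lemma ``is an easy computation,'' and the middle expressions $\sum_{S\subseteq[n]}p^{|S|}q^{\sum_{i\in S}i}$ and $\sum_{S\subseteq[n-1]}p^{|S|}q^{\sum_{i\in S}i}$ in the statement already encode the subset parametrization you use). The bijection $S\mapsto\{s-1\mid s\in S\setminus\{1\}\}$ from even subsets of $[n]$ to subsets of $[n-1]$, together with the observation that $\epsilon$ absorbs the possible element $1\in S$, is precisely the right bookkeeping for the $D^J$ case.
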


\begin{prop}\label{sopra}
The distribution of $({\rm nmaj},\ell_B)$ over $B_n$ is symmetric, namely
\[B_n(t,q):=\sum_{\be \in B_n} t^{\nmaj(\be)}q^{\ell_B(\be)}=\sum_{\be \in B_n} t^{\ell_B(\be)}q^{\nmaj(\be)}\]
\end{prop}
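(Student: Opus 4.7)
The plan is to reduce the $B_n$ symmetry to the known $S_n$ symmetry (Theorem \ref{FS2}) via the parabolic decomposition $B_n = \biguplus_{\s \in S_n}\{u\s \mid u \in B^J\}$ from (\ref{decomp}). The key observation is that all four statistics $\ell_B$, $\maj$, $\me_1$, $\me_2$ split cleanly with respect to this factorisation $\be = u\s$.

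First I would verify the following identities for $\be = u\s$ with $u \in B^J$ and $\s \in S_n$:
\begin{itemize}
\item $\me_1(\be) = \me_1(u)$ and $\me_2(\be) = \me_2(u)$, because multiplying on the right by $\s$ only permutes the positions of the values $u(1),\dots,u(n)$, leaving invariant both the number of negative values and the set of unordered pairs $\{\be(i),\be(j)\}$.
\item $\Des(\be) = \Des(\s)$, since $u$ is increasing and so $\be(i) > \be(i+1) \iff u(\s(i)) > u(\s(i+1)) \iff \s(i) > \s(i+1)$. Hence $\maj(\be) = \maj(\s)$ and $\inv(\be) = \inv(\s)$.
\item Consequently, by Proposition 2.1(ii) and the length formula, $\ell_B(\be) = \inv(\s) + \me_1(u) + \me_2(u)$, and by (\ref{nmaj}), $\nmaj(\be) = \maj(\s) + \me_1(u) + \me_2(u)$.
\end{itemize}

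Substituting these into the definition of $B_n(t,q)$ and using the disjoint decomposition (\ref{decomp}) gives a factorisation
\[
B_n(t,q) = \left(\sum_{\s \in S_n} t^{\maj(\s)} q^{\inv(\s)}\right)\cdot \left(\sum_{u \in B^J} (tq)^{\me_1(u)+\me_2(u)}\right) = S_n(t,q) \cdot \prod_{i=1}^{n}\bigl(1+(tq)^i\bigr),
\]
where the second equality uses Lemma \ref{lemmino} (with the role of $q$ played by $tq$, or equivalently the direct computation via (\ref{som}) and the parametrisation of $B^J$ by subsets $S \subseteq [n]$).

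To conclude, I would observe that the product $\prod_{i=1}^n (1 + (tq)^i)$ is manifestly symmetric in $t$ and $q$, while $S_n(t,q) = S_n(q,t)$ is precisely Theorem \ref{FS2}. Therefore $B_n(t,q) = B_n(q,t)$. There is no real obstacle here beyond verifying the three bullet points above; the main conceptual content lives in the decomposition (\ref{decomp}) plus the invariance of $\me_1,\me_2$ under right-multiplication by $S_n$, which is what makes $\nmaj$ and $\ell_B$ combine symmetrically in the exponent $tq$.
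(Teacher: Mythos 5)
Your proposal is correct and follows essentially the same route as the paper's own proof: decompose $B_n$ via (\ref{decomp}), observe that $\me_1,\me_2$ depend only on $u$ while $\maj,\inv$ of $u\s$ reduce to those of $\s$, and invoke Theorem \ref{FS2} to swap $t$ and $q$ on the $S_n$ factor. The only (cosmetic) difference is that you evaluate the $B^J$ factor explicitly as $\prod_{i=1}^n\bigl(1+(tq)^i\bigr)$ via Lemma \ref{lemmino}, whereas the paper simply leaves it as $\sum_{u}(tq)^{\me_1(u)+\me_2(u)}$, which is already visibly symmetric in $t$ and $q$.
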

\begin{proof} Let consider the decomposition (\ref{decomp}) of $B_n$. Let $u \in B^J$ (or $D^J$) and $\s \in S_n$. Then the following equalities hold
\[\maj(u\s)=\maj(\s) \;\; {\rm and} \;\; \inv(u\s)=\inv(u).\]
Moreover 
\[\me_1(u \s)=\me_1(u) \;\; {\rm and} \;\; \me_2(u\s)=\me_2(u).\] 
Then from Theorem \ref{FS2} it follows
\begin{eqnarray*}
\sum_{\be \in B_{n}} t^{\ell_B(\be)}q^{\nmaj(\g)} & = & \sum_{u
\in B^J} \sum_{\s \in S_{n}} t^{\inv(u \sigma) + \me_1(u \s)+ \me_{2}(u\sigma
)} q^{\maj(u \sigma )+\me_1(u \s) + \me_{2}(u\sigma)} \\
& = & \sum_{u \in B^J}t^{\me_1(u)+\me_2(u)}q^{\me_1(u)+\me_2(u)}\sum_{\s \in S_n}t^{\inv(\s)} q^{maj(\s)}\\
& = & \sum_{u \in B^J}t^{\me_1(u)+\me_2(u)}q^{\me_1(u)+\me_2(u)}\sum_{\s \in S_n}t^{\maj(\s)} q^{\inv(\s)}\\
 & = & \sum_{u
\in B^J} \sum_{\s \in S_{n}} t^{\maj(u \sigma) +\me_1(u\s)+\me_2(u\s)} q^{\inv(u \sigma)+ \me_1(u\s)+\me_2(u\s)}  \\
& = & \sum_{\be \in B_{n}} t^{\nmaj(\be)}q^{\ell_B(\be)}.
\end{eqnarray*}
\end{proof}

The analogous result holds for $D_n$. The proof is very similar to that of $B_n$ and is left to the reader.
\begin{prop}
The pair of statistics $(\dmaj,\ell_D)$ is symmetric, namely
\[D_n(t,q):=\sum_{\g \in D_n} t^{\dmaj(\g)}q^{\ell_D(\g)}=\sum_{\g \in D_n} t^{\ell_D(\g)}q^{\dmaj(\g)}.\]
\end{prop}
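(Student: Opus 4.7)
The plan is to mirror the proof of Proposition \ref{sopra}, using the parabolic decomposition of $D_n$ that the paper has already set up, namely
\[D_n = \biguplus_{\s \in S_n}\{u\s \mid u \in D^J\}\]
with $J = S_D \setminus \{s_0^D\}$ and $D^J = \{u \in D_n \mid u(1)<\cdots<u(n)\}$. No new structural work is required at this level.

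The first step is to establish the compatibility identities: for every $u \in D^J$ and $\s \in S_n$,
\[\inv(u\s) = \inv(\s), \qquad \maj(u\s) = \maj(\s), \qquad \me_2(u\s) = \me_2(u).\]
The first two follow because $u$, viewed as a strictly increasing map $[n] \to [-n,n]\setminus\{0\}$, preserves relative order, so the classical descents and inversions of the window of $u\s = (u(\s(1)),\ldots,u(\s(n)))$ coincide precisely with those of $\s$. The third follows because $\me_2$ depends only on the multiset of signed values, which $\s$ merely permutes.

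Next, I combine these with the defining identities $\ell_D(\g) = \inv(\g) + \me_2(\g)$ and $\dmaj(\g) = \maj(\g) + \me_2(\g)$ from the excerpt. Setting $\g = u\s$ gives $\ell_D(u\s) = \inv(\s) + \me_2(u)$ and $\dmaj(u\s) = \maj(\s) + \me_2(u)$. Summing over the decomposition, the joint generating function factorizes as
\[\sum_{\g \in D_n} t^{\ell_D(\g)}\, q^{\dmaj(\g)} \;=\; \left(\sum_{u \in D^J} (tq)^{\me_2(u)}\right)\left(\sum_{\s \in S_n} t^{\inv(\s)}q^{\maj(\s)}\right).\]

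Finally, I would apply Theorem \ref{FS2} to exchange $\inv$ and $\maj$ on the $S_n$ factor, and then run the manipulation in reverse to recognize the right-hand side as $\sum_\g t^{\dmaj(\g)}q^{\ell_D(\g)}$. The $D^J$-factor is manifestly symmetric in $t$ and $q$, so Foata-Sch\"utzenberger over $S_n$ is the only non-trivial ingredient. There is no substantive obstacle beyond the $B_n$ case of Proposition \ref{sopra}; if anything the $D$-version is slightly cleaner, since only $\me_2(u)$ (and not $\me_1(u)+\me_2(u)$) appears in the $u$-sum, reflecting the absence of the $\me_1$-term in both $\ell_D$ and $\dmaj$.
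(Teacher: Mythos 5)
Your proposal is correct and is exactly the argument the paper intends: the paper's own ``proof'' of this proposition is the single sentence that it is analogous to the $B_n$ case (Proposition \ref{sopra}) and is left to the reader, and you have carried out precisely that adaptation --- parabolic decomposition $D_n=\biguplus_{\s}\{u\s \mid u\in D^J\}$, the compatibility identities for $\inv$, $\maj$, $\me_2$, factorization of the generating function, and an appeal to Theorem \ref{FS2}. Your observation that the $D^J$-factor is a function of $tq$ alone (hence manifestly symmetric) is a clean way to phrase the final step, and you have also silently corrected the paper's typo ``$\inv(u\s)=\inv(u)$'', which should read $\inv(u\s)=\inv(\s)$ as your version has it.
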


Note that, the flag-major index and the {\em $D$-major index} \cite{BC} do not share with $\nmaj$ and $\dmaj$ this symmetric distribution property. 
\smallskip

The following identities are generalizations of Theorem \ref{Roselle} of Roselle to $B_n$ and $D_n$. They easily follow from the proof of Proposition \ref{sopra}, Lemma \ref{lemmino}, and from Theorem \ref{Roselle}.
\begin{prop}[Roselle Identities for $B_n$ and $D_n$]
\begin{eqnarray*}
\sum_{n\geq 0} B_n(t,q) \frac{u^n}{(t;t)_n (q;q)_n  (-qt;qt)_{n}}&=&\frac{1}{(u;t,q)_{\infty,\infty}}, \ \  (B_0(t,q):=0);\\
1+ \sum_{n\geq 1} D_n(t,q) \frac{u^n}{(t;t)_n (q;q)_n (-qt;qt)_{n-1}}&=&\frac{1}{(u;t,q)_{\infty,\infty}}.
\end{eqnarray*}
\end{prop}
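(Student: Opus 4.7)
The plan is to reduce each Roselle-type identity for $B_n$ and $D_n$ to the classical Roselle identity (Theorem \ref{Roselle}) by factoring the joint generating functions $B_n(t,q)$ and $D_n(t,q)$ through $S_n(t,q)$.

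First I would invoke the parabolic decomposition (\ref{decomp}), namely $B_n = \biguplus_{\s \in S_n}\{u\s : u \in B^J\}$. Exactly as in the proof of Proposition \ref{sopra}, for $u \in B^J$ and $\s \in S_n$ one has $\maj(u\s) = \maj(\s)$, $\inv(u\s) = \inv(\s)$, together with $\me_1(u\s) = \me_1(u)$ and $\me_2(u\s) = \me_2(u)$ (the last two because $\s$ permutes $[n]$ and hence also $\binom{[n]}{2}$). Using $\nmaj = \maj + \me_1 + \me_2$ and $\ell_B = \inv + \me_1 + \me_2$, this yields the factorisation
\[
B_n(t,q) \;=\; \Biggl(\sum_{u \in B^J} (tq)^{\me_1(u)+\me_2(u)}\Biggr) \cdot S_n(t,q).
\]
Next I would evaluate the $B^J$-factor via Lemma \ref{lemmino}. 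Specialising $p=1$ and substituting $q \mapsto tq$ in the first part of that lemma gives $\sum_{u \in B^J}(tq)^{\me_1(u)+\me_2(u)} = (-tq;tq)_n$, whence
\[
\frac{B_n(t,q)}{(t;t)_n (q;q)_n (-qt;qt)_n} \;=\; \frac{S_n(t,q)}{(t;t)_n (q;q)_n}.
\]
Summing over $n$ and applying Theorem \ref{Roselle} produces $1/(u;t,q)_{\infty,\infty}$ on the right, proving the first identity.

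The $D_n$ case follows the same template with the decomposition $D_n = \biguplus_{\s \in S_n}\{u\s : u \in D^J\}$. Since $\dmaj = \maj + \me_2$ and $\ell_D = \inv + \me_2$, the factorisation becomes $D_n(t,q) = \bigl(\sum_{u \in D^J}(tq)^{\me_2(u)}\bigr)\cdot S_n(t,q)$, and the second part of Lemma \ref{lemmino} (again with $p=1$, $q \mapsto tq$) gives $(-tq;tq)_{n-1}$ for the $D^J$-factor. Dividing by $(t;t)_n(q;q)_n(-qt;qt)_{n-1}$, summing for $n \ge 1$, and adding the $n=0$ contribution $1$ on the left, both sides collapse to $\sum_{n\ge 0} S_n(t,q)u^n/((t;t)_n(q;q)_n) = 1/(u;t,q)_{\infty,\infty}$ by Theorem \ref{Roselle}.

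There is no genuine obstacle; the only care needed is the specialisation in Lemma \ref{lemmino}. Because the negative-statistics exponent $\me_1+\me_2$ (resp.\ $\me_2$) is weighted simultaneously by $t$ and by $q$, one cannot directly use the lemma with $p=t$; instead one sets $p=1$ and performs the substitution $q\mapsto tq$, which produces $(-tq;tq)_n$ (resp.\ $(-tq;tq)_{n-1}$) and matches precisely the denominator factor present in the series, allowing the clean reduction to the symmetric-group case.
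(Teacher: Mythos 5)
Your proof is correct and is precisely the argument the paper intends: the paper establishes this proposition by citing the factorisation $B_n(t,q)=\bigl(\sum_{u\in B^J}(tq)^{\me_1(u)+\me_2(u)}\bigr)S_n(t,q)$ (resp.\ the $D^J$ analogue) from the proof of Proposition \ref{sopra}, Lemma \ref{lemmino} specialised exactly as you do ($p=1$, $q\mapsto tq$), and Theorem \ref{Roselle}, and you have simply written out those steps. The only remark worth adding is that the paper's convention $B_0(t,q):=0$ must be a misprint for $B_0(t,q):=1$, since the $n=0$ term of your sum (and the constant term of the right-hand side) requires the value $1$.
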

\smallskip

Similarly the following identities, which generalize Gessel formula, follow from the proof of Proposition \ref{sopra}, Lemma \ref{lemmino}, and Theorem \ref{Gessel}.  
\begin{prop}[Gessel Identities for $B_n$ and $D_n$]
\begin{eqnarray*}
\sum_{n\geq 0} \frac{u^n}{[n]_q !} \frac{\sum_{\be \in B_n} t^{\nmaj(\s)} q^{\ell_B(\be)} p^{\ndes(\be)}}{(-tqp;tq)_{n} (t;q)_{n+1}}&=&\sum_{k \geq 0} p^{k} e[u]_q e[tu]_q\cdots e[t^k u]_q;\\
\frac{1}{1-t} +\sum_{n\geq 1} \frac{u^n}{[n]_q !} \frac{\sum_{\g \in D_n} t^{\dmaj(\g)} q^{\ell_D(\g)} p^{\ddes(\g)}}{(-tqp;tq)_{n-1}(t;q)_{n+1}}&=&\sum_{k \geq 0} p^{k} e[u]_q e[tu]_q\cdots e[t^k u]_q.
\end{eqnarray*}
\end{prop}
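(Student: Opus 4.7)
The plan is to mimic the proof of Proposition \ref{sopra}, using the parabolic decompositions $B_n=\biguplus_{\sigma\in S_n}\{u\sigma:u\in B^J\}$ and $D_n=\biguplus_{\sigma\in S_n}\{u\sigma:u\in D^J\}$ to factor each trivariate generating function as a product of a sum over the quotient and Gessel's sum over $S_n$. The key point is that the quotient factor produced this way cancels exactly the matching denominator on the left-hand side, reducing both identities to Theorem \ref{Gessel}.

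For the $B_n$ identity, write $\beta=u\sigma$ with $u\in B^J$ and $\sigma\in S_n$. The monotonicity $u(1)<u(2)<\cdots<u(n)$ gives $\Des(\beta)=\Des(\sigma)$, hence $\maj(\beta)=\maj(\sigma)$, $\inv(\beta)=\inv(\sigma)$ and $\des(\beta)=\des(\sigma)$; meanwhile $\me_1(u\sigma)=\me_1(u)$ and $\me_2(u\sigma)=\me_2(u)$, since $(u\sigma)([n])=u([n])$. Substituting these identities into $\nmaj=\maj+\me_1+\me_2$, $\ell_B=\inv+\me_1+\me_2$ and $\ndes=\des+\me_1$, the sum factors as
\[
\sum_{\beta\in B_n}t^{\nmaj(\beta)}q^{\ell_B(\beta)}p^{\ndes(\beta)}=\Bigl(\sum_{u\in B^J}p^{\me_1(u)}(tq)^{\me_1(u)+\me_2(u)}\Bigr)\Bigl(\sum_{\sigma\in S_n}t^{\maj(\sigma)}q^{\inv(\sigma)}p^{\des(\sigma)}\Bigr).
\]
By Lemma \ref{lemmino} (with $q$ replaced by $tq$) the first factor equals $(-ptq;tq)_n$, which cancels the denominator $(-tqp;tq)_n$. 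Dividing by $(t;q)_{n+1}$ and summing against $u^n/[n]_q!$ then leaves exactly the left-hand side of Theorem \ref{Gessel}, yielding the desired $B_n$ identity.

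The $D_n$ case follows the same template. Using $\dmaj=\maj+\me_2$, $\ell_D=\inv+\me_2$ and $\ddes=\des+\me_1+\epsilon$, and noting that $\epsilon(u\sigma)=\epsilon(u)$ because $(u\sigma)([n])=u([n])$, one obtains
\[
\sum_{\gamma\in D_n}t^{\dmaj(\gamma)}q^{\ell_D(\gamma)}p^{\ddes(\gamma)}=\Bigl(\sum_{u\in D^J}p^{\me_1(u)+\epsilon(u)}(tq)^{\me_2(u)}\Bigr)\Bigl(\sum_{\sigma\in S_n}t^{\maj(\sigma)}q^{\inv(\sigma)}p^{\des(\sigma)}\Bigr).
\]
The second half of Lemma \ref{lemmino} evaluates the first factor as $(-ptq;tq)_{n-1}$, which cancels the denominator $(-tqp;tq)_{n-1}$ for every $n\geq 1$. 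Summing over $n\geq 1$ thus recovers Theorem \ref{Gessel} minus its $n=0$ contribution, which is $1/(t;q)_1=1/(1-t)$; this is precisely the isolated summand on the left of the $D_n$ identity. The only step requiring genuine verification is the compatibility between the $B_n$/$D_n$ statistics and the parabolic splitting---in particular that $\epsilon$ and $\me_1$ are fully carried by the $B^J$/$D^J$ factor, so that no residual $\sigma$-dependent term pollutes the quotient sum---once this bookkeeping is in place, both identities reduce mechanically to Gessel's theorem.
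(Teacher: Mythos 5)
Your proof is correct and follows exactly the route the paper intends: the paper's own ``proof'' is a one-line remark that the identities follow from the proof of Proposition \ref{sopra}, Lemma \ref{lemmino}, and Theorem \ref{Gessel}, and your argument is precisely that reduction carried out in detail. The factorization of the trivariate sum over the parabolic decomposition, the evaluation of the quotient factor via Lemma \ref{lemmino} with $q$ replaced by $tq$, and the identification of the missing $n=0$ term as $1/(1-t)$ in the $D_n$ case are all as the author intended.
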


\section{Concluding remarks}

As we mentioned along the paper, there exists another family of statistics, the {\em flag-statistics}, defined on Coxeter groups of type $B$, $D$ (see \cite{AR} and \cite{BC}), and more generally on complex reflection groups \cite{BaB}. Several generating functions involving flag-statistics have already been computed. In particular, we refer to the series of papers of Foata and Han \cite{FH2,FH3,FH5}, for a complete overview on the argument. 

We remark that among the series computed, none involve a combination of flag-statistics and length. This is why we conclude the paper with the following interesting proposal.

\begin{problem}
What kind of identities, generalizing the ones of Roselle and Gessel, might be obtained by using flag-statistics ?
\end{problem}

\end{document}